\title{Центральные меры графа прыжков по графу Юнга--Фибоначчи}
\author{В. Ю. Евтушевский}
\begin{document}

\maketitle

\tableofcontents

\newpage

\newtheorem{Lemma}{Лемма}

\newtheorem{Alg}{Алгоритм}

\newtheorem{Col}{Следствие}

\newtheorem{theorem}{Теорема}

\newtheorem{Def}{Определение}

\newtheorem{Prop}{Утверждение}

\newtheorem{Problem}{Задача}

\newtheorem{Zam}{Замечание}

\newtheorem{Oboz}{Обозначение}

\newtheorem{Ex}{Пример}

\newtheorem{Nab}{Наблюдение}
\section{Введение}

Пусть $G=(V,E)$ —- ациклический ориентированный граф. Ему соответствует частично упорядоченное множество $(V,\le)$ с отношением достижимости: $v_1\le v_2$, если есть путь из $v_1$ в $v_2$. Рассмотрим градуированный граф, $i$-ый уровень которого есть $V\times \{i\}$ и ребро из $(v_1,i)$ в $(v_2,i+1)$ ведёт, если $v_1\leqslant v_2$. Его естественно называть графом прыжков по $G$. Естественные вопросы о градуированном графе —- перечисление путей между вершинами (в данном случае —- перечисление нестрогих цепей данной длины между двумя элементами частично упорядоченного множества $(V,\le)$) и описание вероятностных центральных мер на пространстве путей из первого уровня в бесконечность (центральность означает, что все начальные отрезки случайного пути с фиксированной вершиной $n$-го уровня равновероятны). Для градуированных графов Юнга и Юнга — Фибоначчи эти задачи давно решены. Для графов прыжков дело обстоит сложнее, хотя в случае графа Юнга и можно написать детерминантную формулу для числа путей между вершинами. Настоящая работа посвящена нахождению всех центральных мер в графе прыжков по естественному подграфу графа Юнга — Фибоначчи, образованного словами с ограниченным количеством двоек (см. подробные определения далее).

Рассмотрим слова над алфавитом $\{1,2\}$.
Как известно, количество таких слов с суммой цифр $n$ есть число Фибоначчи $F_{n+1}$
($F_0=0,$ $F_1=1,$ $F_{k+2}=F_{k+1}+F_k$), и это самая распространённая
комбинаторная
интерпретация чисел Фибоначчи. Также можно думать 
о разбиениях полосы $2\times n$ на домино $1\times 2$ и $2\times 1$,
сопоставляя двойки парам горизонтальных домино, а 
единицы вертикальным домино.

Введём на этом множестве слов частичный порядок: будем говорить,
что слово $x$ предшествует слову $y$, если после удаления 
общего суффикса в слове $y$ остаётся не меньше
двоек, чем в слове $x$ остаётся цифр. 

Это действительно частичный порядок, более того, 
соответствующее частично упорядоченное множество
является модулярной
решёткой, известной как решётка Юнга -- Фибоначчи.

\begin{center}
\includegraphics[width=12cm, height=10cm]{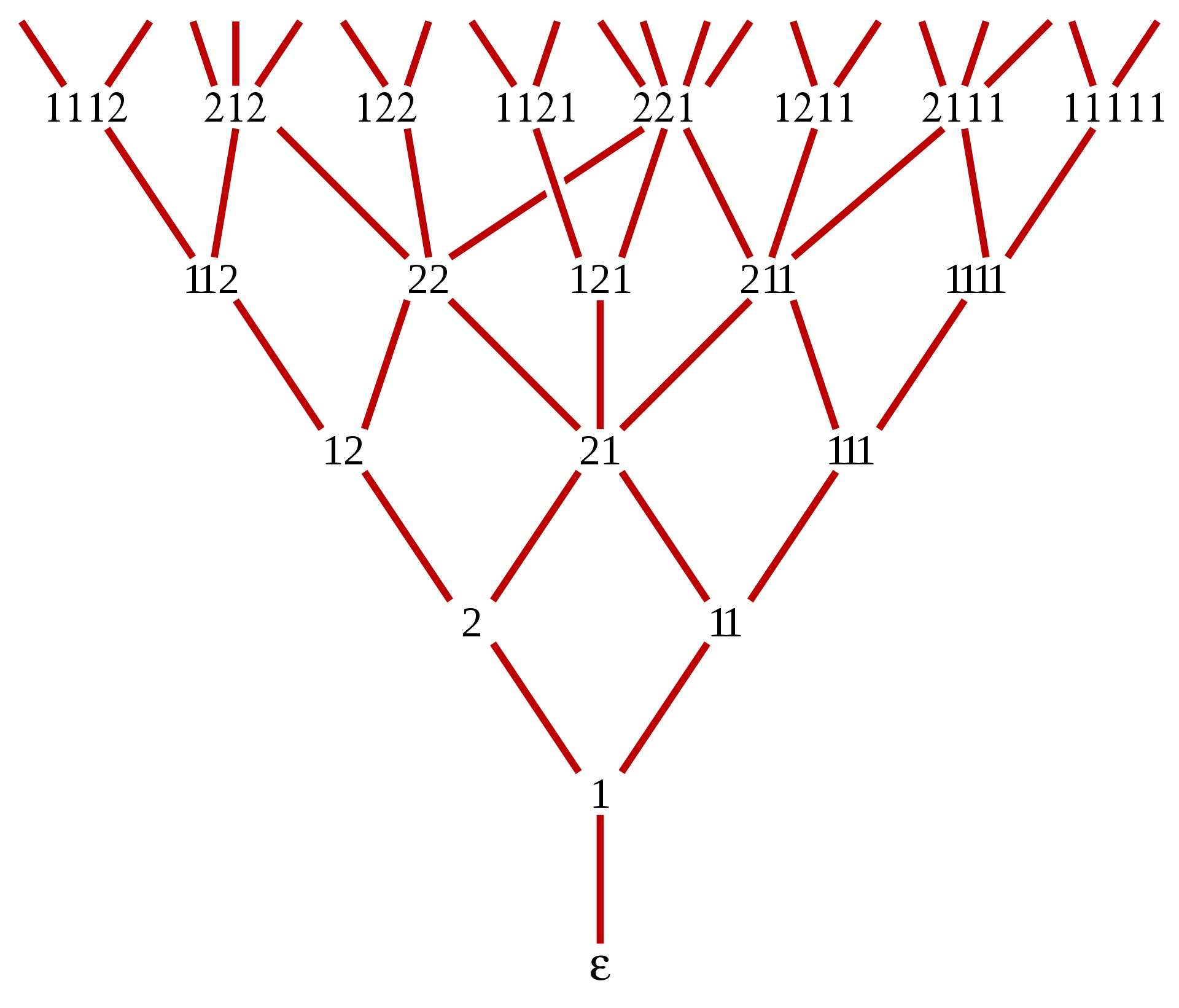}
\end{center}

Графом Юнга -- Фибоначчи (он изображён на рисунке выше) называют диаграмму Хассе этой
решётки. Это градуированный граф, который мы представляем
растущим снизу вверх начиная с пустого слова. 
Градуировкой служит функция суммы цифр. Опишем явно,
как устроены ориентированные
рёбра. Рёбра ``вверх'' из данного
слова $x$ ведут в слова, получаемые из $x$ одной из двух операций:
\begin{enumerate}
    \item  заменить самую левую единицу на двойку;

    \item вставить единицу левее чем самая левая единица.
\end{enumerate}

\renewcommand{\labelenumi}{\arabic{enumi}$^\circ$}
\renewcommand{\labelenumii}{\arabic{enumi}.\arabic{enumii}$^\circ$}
\renewcommand{\labelenumiii}{\arabic{enumi}.\arabic{enumii}.\arabic{enumiii}$^\circ$}

Этот граф помимо модулярности является $1$-дифференциальным, то есть
для каждой вершины исходящая степень на $1$ превосходит 
входящую степень.

Изучение градуированного графа Юнга -- Фибоначчи было инициировано
в 1988 году одновременно и независимо такими математиками, как
Ричард Стенли \cite{St} и Сергей Владимирович Фомин \cite{Fo}.

Причина интереса к нему в том, что существует всего две $1$-дифференциальных
модулярных решётки, вторая --- это решётка диаграмм Юнга, 
имеющая ключевое значение в теории представлений симметрической
группы. 

Центральные вопросы о градуированных графах касаются центральных мер на 
пространстве (бесконечных) путей в графе. Эта точка зрения
последовательно развивалась в работах Анатолия Моисеевича Вершика,
к недавнему обзору которого
\cite{Ver} и приводимой там литературе мы отсылаем читателя.

Среди центральных мер выделяют те, которые являются пределами 
мер, индуцированных путями в далёкие вершины --- так называемую
границу Мартина графа.

Граница пространства путей графа Юнга -- Фибоначчи изучалась в работе
Фредерика Гудмана и Сергея Васильевича Керова (2000) \cite{GK}.

Мы рассматриваем подграф графа Юнга -- Фибоначчи, образованный словами с не более, чем $k$ двойками. Его можно рассматривать как аналог подграфа графа -- Юнга, образованный диаграммами с не более. чем $k$ строками. Мы полностью описываем центральные меры на пространстве путей в графе прыжков по этому графу. Как обычно, для описания центральных мер используется эргодический метод Вершика.
\newpage

\section{Количество путей между двумя вершинами графа прыжков }

\subsection{Первая формула}

\begin{Oboz}
Пусть $\mathbb{YF}$ -- это граф Юнга -- Фибоначчи.
\end{Oboz}
\begin{Oboz}

$\;$
    
\begin{itemize}
    \item Пусть $v \in \mathbb{YF}$. Сумму цифр в $v$ обозначим за $|v|$.
    \item Пусть $v \in \mathbb{YF}$. Количество цифр в $v$ обозначим за $\#v$.
\end{itemize}
\end{Oboz}

\begin{Zam}
Пусть $v\in\mathbb{YF}$. Тогда $|v|$ -- это ранг вершины $v$ в графе Юнга -- Фибоначчи.
\end{Zam}

\begin{Oboz} 
Пусть $n,m\in\mathbb{N}_0:$ $m\le n$. Обозначим
    \item $$\overline{n}:=\{0,1,\ldots,n\};$$
    \item $$\overline{m,n}:=\{m,m+1\ldots,n\}.$$
\end{Oboz}

\begin{Oboz}
Пусть $s\in\mathbb{N}_0$, $v_0, v_s\in {\mathbb{YF}}:$ $|v_s|=|v_0|+s$. Тогда $d(v_0,v_s)$ -- это количество путей в ${\mathbb{YF}}$ вида 
$$v_0 - v_1 - \ldots - v_i - \ldots - v_s,$$
таких что $\forall i \in\overline{1,s-1}\quad |v_i|=|v_0|+i$.
\end{Oboz}

\begin{Oboz}
Пусть $v\in\mathbb{YF}$. Множество всех предков вершины $v$ обозначим за $r(v)$.    
\end{Oboz}

\begin{Oboz}
Пусть $\overline{\mathbb{YF}}$ -- это следующий градуированный граф:

\begin{itemize}
    \item Множество вершин этого графа -- это множество $\{(\varepsilon,0)\}\cup \mathbb{YF}\times \mathbb{N}$;
    \item На уровне $0$ находится ровно одна вершина (корень) $(\varepsilon,0)$ или $\varepsilon$;
    \item На уровне $n\in\mathbb{N}$ находятся вершины вида $(v,n)\in\mathbb{YF}\times \{n\}$; 
    \item Между вершинами $(w,n)\in \overline{\mathbb{YF}}$  и $(v,n+1)\in \overline{\mathbb{YF}}$ ребро проведено в том и только в том случае, когда $w\in r(v)$. 
\end{itemize}

\end{Oboz}

\begin{Oboz}
Пусть $m\in\mathbb{N}_0$, $n\in\mathbb{N}$, $(v_0,m),(v_n,m+n)\in \overline{\mathbb{YF}}$. Тогда $D(v_0,v_n,n)$ -- это количество путей в $\overline{\mathbb{YF}}$ вида 
$$(v_0,m) - (v_1,m+1) - \ldots - (v_i,m+i) - \ldots - (v_n,m+n).$$    
\end{Oboz}

\begin{Zam}
Ясно, что обозначение корректно: значение $D(v_0,v_n,n)$ 
не зависит от $m$.
\end{Zam}

Следующее утверждение непосредственно следует из
определения и базовых свойств графа Юнга -- Фибоначчи.

\begin{Prop}
Пусть $v\in\mathbb{YF}$. Тогда
    \begin{enumerate}
        \item $$r(1v)= r(v) \cup \left\{1v\right\};$$
        \item $$r(2v)=\left\{\varepsilon\right\}\cup \bigcup_{u\in r(v)} \{1u\}\cup \bigcup_{u\in r(v)} \{2u\}.$$
    \end{enumerate}
\end{Prop}

\begin{theorem} 
Пусть $n\in\mathbb{N}$, $v\in {\mathbb{YF}}$. Тогда
\begin{enumerate}
    \item $$D(\varepsilon,1v,n)=\sum_{m=1}^n D(\varepsilon,v,m);$$    
    \item $$D(\varepsilon,2v,n)=\sum_{m=1}^n m\cdot D(\varepsilon,v,m).$$    
\end{enumerate}
\begin{proof}

$\;$

\begin{enumerate}


    \item 

    Индукция по $n$.

    База: $n=1$ -- очевидно.

    Переход к $n\ge 2$:
    $$D(\varepsilon,1v,n)=\sum_{u\in r(1v)}D(\varepsilon,u,n-1)=\sum_{u\in r(v)}D(\varepsilon,u,n-1)+D(\varepsilon,1v,n-1)=$$
    $$=D(\varepsilon,v,n)+\sum_{m=1}^{n-1} D(\varepsilon,v,m)=\sum_{m=1}^{n} D(\varepsilon,v,m).$$
    \item 

    Индукция по $n$.

    База: $n=1$ -- очевидно.

    Переход к $n \ge 2$:
    $$D(\varepsilon,2v,n)=\sum_{u\in r(2v)}D(\varepsilon,u,n-1)=D(\varepsilon,\varepsilon,n-1)+\sum_{u\in r(v)}\left(D(\varepsilon,1u,n-1)+D(\varepsilon,2u,n-1)\right)=$$
    $$=1+\sum_{u\in r(v)}\left(\sum_{m=1}^{n-1} D(\varepsilon,u,m)+\sum_{m=1}^{n-1} m\cdot D(\varepsilon,u,m)  \right)=1+\sum_{m=2}^{n} m\cdot D(\varepsilon,v,m)=\sum_{m=1}^{n} m\cdot D(\varepsilon,v,m).$$
    
\end{enumerate}    
\end{proof}
\end{theorem}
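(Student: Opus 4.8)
The plan is to prove both identities by induction on $n$, using the recursive description of predecessor sets $r(1v)$ and $r(2v)$ from the preceding Proposition together with the defining recurrence $D(\varepsilon,w,n)=\sum_{u\in r(w)}D(\varepsilon,u,n-1)$ that comes directly from the edge structure of $\overline{\mathbb{YF}}$. The base case $n=1$ is immediate: a path of length $1$ from the root to any vertex of rank one exists and is unique, so both sides equal $1$.

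For part (1), I would expand $D(\varepsilon,1v,n)$ over the set $r(1v)=r(v)\cup\{1v\}$, splitting off the term $D(\varepsilon,1v,n-1)$ corresponding to the vertex $1v$ itself. The remaining sum $\sum_{u\in r(v)}D(\varepsilon,u,n-1)$ is exactly $D(\varepsilon,v,n)$ by the edge recurrence, while $D(\varepsilon,1v,n-1)=\sum_{m=1}^{n-1}D(\varepsilon,v,m)$ by the induction hypothesis. Adding these gives $D(\varepsilon,v,n)+\sum_{m=1}^{n-1}D(\varepsilon,v,m)=\sum_{m=1}^{n}D(\varepsilon,v,m)$, closing the induction.

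For part (2), I would similarly expand over $r(2v)=\{\varepsilon\}\cup\bigcup_{u\in r(v)}\{1u\}\cup\bigcup_{u\in r(v)}\{2u\}$. The vertex $\varepsilon$ contributes $D(\varepsilon,\varepsilon,n-1)=1$. For the other two families I apply the induction hypothesis to $D(\varepsilon,1u,n-1)$ via part (1) — note part (1) is already available for all $n$ since it was proved first — and to $D(\varepsilon,2u,n-1)$ via the induction hypothesis for part (2), obtaining $\sum_{m=1}^{n-1}D(\varepsilon,u,m)$ and $\sum_{m=1}^{n-1}m\cdot D(\varepsilon,u,m)$ respectively. Summing over $u\in r(v)$ and using the edge recurrence $\sum_{u\in r(v)}D(\varepsilon,u,m)=D(\varepsilon,v,m+1)$ turns the inner sums into $\sum_{m=1}^{n-1}D(\varepsilon,v,m+1)+\sum_{m=1}^{n-1}m\cdot D(\varepsilon,v,m+1)=\sum_{m=2}^{n}m\cdot D(\varepsilon,v,m)$, so adding the $1$ from the root gives $\sum_{m=1}^{n}m\cdot D(\varepsilon,v,m)$.

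The only delicate point — and the place to be careful rather than the place of real difficulty — is the bookkeeping of index shifts in part (2): one must combine $\sum_{m=1}^{n-1}D(\varepsilon,v,m+1)$ (which reindexes to $\sum_{m=2}^{n}D(\varepsilon,v,m)$, contributing the "$1\cdot$" part of the missing $m=1$ term nowhere, since the $m=1$ term of $\sum m\cdot D$ vanishes against... ) and $\sum_{m=1}^{n-1}m\cdot D(\varepsilon,v,m+1)$ (which reindexes to $\sum_{m=2}^{n}(m-1)\cdot D(\varepsilon,v,m)$) and check that their sum is precisely $\sum_{m=2}^{n}m\cdot D(\varepsilon,v,m)$, after which the additive constant $1=1\cdot D(\varepsilon,v,1)$ supplies the missing $m=1$ term. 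Everything else is a routine application of the two structural facts, so no genuine obstacle is expected.
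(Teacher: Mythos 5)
Your proposal is correct and follows essentially the same route as the paper: induction on $n$, expanding $D(\varepsilon,\cdot,n)$ over $r(1v)=r(v)\cup\{1v\}$ and $r(2v)=\{\varepsilon\}\cup\bigcup_{u\in r(v)}\{1u\}\cup\bigcup_{u\in r(v)}\{2u\}$, and using part (1) together with the inductive hypothesis and the reindexing $\sum_{u\in r(v)}D(\varepsilon,u,m)=D(\varepsilon,v,m+1)$. The index bookkeeping you flag works out exactly as you describe, matching the paper's computation.
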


\begin{theorem} 
Пусть $n\in\mathbb{N}$, $w,v\in {\mathbb{YF}}$. Тогда
\begin{enumerate}
    \item $$D(1w,1v,n)= \mathbbm{1}_{w=v} + \sum_{m=1}^n D(1w,v,m); $$    
    \item $$D(1w,2v,n)=\sum_{m=1}^n D(w,v,m)+\sum_{m=1}^n (m-1)\cdot D(1w,v,m).$$    
\end{enumerate}
\begin{proof}

$\;$

\begin{enumerate}
    \item 

    Индукция по $n$.

    База: $n=1$ -- очевидно.

    Переход к $n\ge 2$:
    $$D(1w,1v,n)=\sum_{u\in r(1v)}D(1w,u,n-1)=\sum_{u\in r(v)}D(1w,u,n-1)+D(1w,1v,n-1)=$$
    $$=D(1w,v,n)+\mathbbm{1}_{w=v} +\sum_{m=1}^{n-1} D(1w,v,m)=\mathbbm{1}_{w=v} +\sum_{m=1}^{n} D(1w,v,m).$$
    \item 

    Индукция по $n$.

    База: $n=1$ -- очевидно.

    Переход к $n\ge 2$:
    $$D(1w,2v,n)=\sum_{u\in r(2v)}D(1w,u,n-1)=D(1w,\varepsilon,n-1)+\sum_{u\in r(v)}\left(D(1w,1u,n-1)+D(1w,2u,n-1)\right)=$$
    $$=\sum_{u\in r(v)}\left(\mathbbm{1}_{w=u} +\sum_{m=1}^{n-1} D(1w,u,m)+\sum_{m=1}^{n-1}  D(w,u,m)+\sum_{m=1}^{n-1} (m-1)\cdot D(1w,u,m)  \right)=$$
    $$=D(w,v,1)+\sum_{m=2}^{n}  D(w,v,m)+\sum_{m=2}^{n} (m-1)\cdot D(1w,v,m)=\sum_{m=1}^n D(w,v,m)+\sum_{m=1}^n (m-1)\cdot D(1w,v,m).$$
    
\end{enumerate}    
\end{proof}
\end{theorem}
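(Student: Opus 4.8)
The plan is to prove the two identities jointly by induction on $n$, exactly as in the proof of the previous theorem but now carrying along the lower endpoint $1w$. The main tool is the one-step recursion $D(x,y,n)=\sum_{u\in r(y)}D(x,u,n-1)$ (valid for $n\ge 2$), which is immediate from the definition of $\overline{\mathbb{YF}}$ since the last edge of a path into $(y,\cdot)$ issues from some $(u,\cdot)$ with $u\in r(y)$, together with the description of $r(1v)$ and $r(2v)$ from the Proposition. Note that the two claims are intertwined: expanding $D(1w,2v,n)$ over $r(2v)$ produces both $D(1w,1u,n-1)$ and $D(1w,2u,n-1)$, so the induction runs on the conjunction of (1) and (2).

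For the base case $n=1$ I would unwind $D(\cdot,\cdot,1)=\mathbbm{1}_{\cdot\in r(\cdot)}$ and read the answer off the Proposition: $r(1v)=r(v)\cup\{1v\}$ is a disjoint union (the two parts sit in different ranks, since $|1v|=|v|+1$), so $D(1w,1v,1)=\mathbbm{1}_{w=v}+\mathbbm{1}_{1w\in r(v)}=\mathbbm{1}_{w=v}+D(1w,v,1)$; and $1w\in r(2v)\iff w\in r(v)$ by part~2 of the Proposition, so $D(1w,2v,1)=D(w,v,1)$, which agrees with the claimed formula since the coefficient $m-1$ kills the $m=1$ term.

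For the inductive step ($n\ge 2$) in part~(1), substituting $r(1v)=r(v)\cup\{1v\}$ into the recursion gives $D(1w,1v,n)=\big(\sum_{u\in r(v)}D(1w,u,n-1)\big)+D(1w,1v,n-1)=D(1w,v,n)+D(1w,1v,n-1)$, and the induction hypothesis turns the last summand into $\mathbbm{1}_{w=v}+\sum_{m=1}^{n-1}D(1w,v,m)$, which telescopes to the claim. For part~(2), $r(2v)=\{\varepsilon\}\cup\{1u:u\in r(v)\}\cup\{2u:u\in r(v)\}$ (again a disjoint union) gives $D(1w,2v,n)=D(1w,\varepsilon,n-1)+\sum_{u\in r(v)}\big(D(1w,1u,n-1)+D(1w,2u,n-1)\big)$, where $D(1w,\varepsilon,n-1)=0$ because an ascending path cannot drop to rank $0$ from the positive rank $|1w|$. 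Plugging in the induction hypotheses for $D(1w,1u,n-1)$ and $D(1w,2u,n-1)$ then leaves a sum over $u\in r(v)$ of four kinds of terms.

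Collapsing that sum is the only delicate point, and is where I expect to spend the effort. One uses $\sum_{u\in r(v)}\mathbbm{1}_{w=u}=\mathbbm{1}_{w\in r(v)}=D(w,v,1)$ and, repeatedly, $\sum_{u\in r(v)}D(x,u,m)=D(x,v,m+1)$. Thus $\sum_{u\in r(v)}\mathbbm{1}_{w=u}+\sum_{u\in r(v)}\sum_{m=1}^{n-1}D(w,u,m)=D(w,v,1)+\sum_{m=2}^{n}D(w,v,m)=\sum_{m=1}^{n}D(w,v,m)$, which handles the $D(w,v,\cdot)$ part; meanwhile $\sum_{u\in r(v)}\sum_{m=1}^{n-1}D(1w,u,m)$ becomes $\sum_{m=2}^{n}D(1w,v,m)$ (coefficient $1$) and $\sum_{u\in r(v)}\sum_{m=1}^{n-1}(m-1)D(1w,u,m)$ becomes $\sum_{m=2}^{n}(m-2)D(1w,v,m)$, so their sum is $\sum_{m=2}^{n}(m-1)D(1w,v,m)=\sum_{m=1}^{n}(m-1)D(1w,v,m)$ since the $m=1$ term vanishes. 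Adding the two pieces gives precisely the asserted formula, closing the induction. Everything other than this index-shifting and coefficient-matching step is a direct transcription of the Proposition and the defining recursion.
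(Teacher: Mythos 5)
Your proposal is correct and follows essentially the same route as the paper: a single-step expansion over $r(1v)$ and $r(2v)$ via the Proposition, induction on $n$, and the same index-shifting to collapse the sum over $u\in r(v)$ (the paper runs the two inductions consecutively, using part~1 as an established fact inside part~2, rather than as a joint induction, but this is immaterial). You are in fact more explicit than the paper about the base cases and about why $D(1w,\varepsilon,n-1)=0$, both of which the paper leaves as "obvious" or drops silently.
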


\begin{theorem} 
Пусть $n\in\mathbb{N}$, $w,v\in {\mathbb{YF}}$. Тогда
\begin{enumerate}
    \item $$D(2w,1v,n)= \sum_{m=1}^n D(2w,v,m); $$    
    \item $$D(2w,2v,n)=D(w,v,n)+\sum_{m=1}^n (m-1)\cdot D(2w,v,m).$$    
\end{enumerate}
\begin{proof}

$\;$

\begin{enumerate}
    \item 

    Индукция по $n$.

    База: $n=1$ -- очевидно.

    Переход к $n\ge 2$:
    $$D(2w,1v,n)=\sum_{u\in r(1v)}D(2w,u,n-1)=\sum_{u\in r(v)}D(2w,u,n-1)+D(2w,1v,n-1)=$$
    $$=D(2w,v,n)+\sum_{m=1}^{n-1} D(2w,v,m)=\sum_{m=1}^{n} D(2w,v,m).$$
    \item 

    Индукция по $n$.

    База: $n=1$ -- очевидно.

    Переход к $n\ge 2$:
    $$D(2w,2v,n)=\sum_{u\in r(2v)}D(2w,u,n-1)=D(2w,\varepsilon,n-1)+\sum_{u\in r(v)}\left(D(2w,1u,n-1)+D(2w,2u,n-1)\right)=$$
    $$=\sum_{u\in r(v)}\left(\sum_{m=1}^{n-1} D(2w,u,m)+  D(w,u,n-1)+\sum_{m=1}^{n-1} (m-1)\cdot D(2w,u,m)  \right)=$$
    $$= D(w,v,n)+\sum_{m=2}^{n} (m-1)\cdot D(2w,v,m)= D(w,v,n)+\sum_{m=1}^n (m-1)\cdot D(2w,v,m).$$
    
\end{enumerate}    
\end{proof}
\end{theorem}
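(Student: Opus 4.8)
The plan is to mimic exactly the structure of the two preceding theorems, doing induction on $n$ and using the recursive description of predecessors $r(2v)$ and $r(1v)$ from the Proposition, together with the recursion for $D(\cdot,\cdot,n-1)$ itself. Concretely, for part~$1$ I would write $D(2w,1v,n)=\sum_{u\in r(1v)}D(2w,u,n-1)$, split off the term $u=1v$ using $r(1v)=r(v)\cup\{1v\}$, and recognize $\sum_{u\in r(v)}D(2w,u,n-1)=D(2w,v,n)$; the leftover term $D(2w,1v,n-1)$ is rewritten by the induction hypothesis, and the two pieces telescope into $\sum_{m=1}^n D(2w,v,m)$. The base case $n=1$ is immediate since $D(2w,1v,1)=\mathbbm{1}_{2w\in r(1v)}$ and one checks $2w\in r(1v)\iff 2w\in r(v)\iff D(2w,v,1)=1$ (note $2w\ne 1v$).

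For part~$2$ the key input is the formula $r(2v)=\{\varepsilon\}\cup\bigcup_{u\in r(v)}\{1u\}\cup\bigcup_{u\in r(v)}\{2u\}$. Expanding $D(2w,2v,n)=\sum_{u\in r(2v)}D(2w,u,n-1)$ gives $D(2w,\varepsilon,n-1)+\sum_{u\in r(v)}\bigl(D(2w,1u,n-1)+D(2w,2u,n-1)\bigr)$. Here $D(2w,\varepsilon,n-1)=0$ whenever $n\ge 2$ (since $\varepsilon$ has no predecessors other than... actually $\varepsilon$ is a source, so $D(2w,\varepsilon,m)=0$ for all $m\ge1$ and nonempty $2w$), which removes the ``$+1$''-type term that appeared in Theorem~1. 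Then I substitute Theorem~2 part~$1$ for $D(2w,1u,n-1)$ wait --- $2w$ starts with $2$, not $1$, so instead I use Theorem~3 part~$1$ for the $D(2w,1u,n-1)$ terms and the induction hypothesis (Theorem~3 part~$2$ at level $n-1$) for the $D(2w,2u,n-1)$ terms. After swapping the order of summation over $u\in r(v)$ and the telescoping index $m$, the inner sums $\sum_{u\in r(v)}D(2w,u,m)$ collapse to $D(2w,v,m+1)$ and $\sum_{u\in r(v)}D(w,u,m)$ collapses to $D(w,v,m+1)$ --- here one must be careful that $\sum_{u\in r(v)}D(x,u,m)=D(x,v,m+1)$ is precisely the defining one-step recursion, valid for $x=2w$ and for $x=w$ alike, including when $w=\varepsilon$. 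Reindexing $m\mapsto m-1$ then yields the two advertised sums, $D(w,v,n)+\sum_{m=1}^n(m-1)D(2w,v,m)$.

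The main obstacle --- or rather the only place demanding genuine care --- is bookkeeping the index shifts and the boundary terms so that the final sums start at $m=1$ rather than $m=2$: in part~$2$ the term $D(w,v,n)$ is exactly the image of the ``$m=1$'' boundary contribution from the $\sum_{u\in r(v)}D(w,u,n-1)$ piece (which becomes $D(w,v,n)$ directly, without entering the telescoping sum), while the $(m-1)$ weight conveniently kills the would-be $m=1$ term of the other sum, so nothing needs to be added by hand. I would also double-check the edge cases $w=\varepsilon$ and $v=\varepsilon$ separately, since then $2w$ or $2v$ degenerates and some of the cited formulas (Theorem~1 versus Theorems~2--3) are the relevant ones; but the predecessor identities in the Proposition hold uniformly, so the computation goes through verbatim. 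Everything else is the same routine telescoping already carried out twice above, so I would present it in the identical two-line-per-part format.
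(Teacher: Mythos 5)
Your proposal is correct and follows essentially the same route as the paper: induction on $n$, the predecessor decompositions $r(1v)=r(v)\cup\{1v\}$ and $r(2v)=\{\varepsilon\}\cup\bigcup_{u\in r(v)}\{1u\}\cup\bigcup_{u\in r(v)}\{2u\}$, part 1 plus the induction hypothesis for the inner terms, and the one-step recursion $\sum_{u\in r(v)}D(x,u,m)=D(x,v,m+1)$ to collapse the sums. Your explicit remarks that $D(2w,\varepsilon,n-1)=0$ and that the $(m-1)$ weight kills the $m=1$ term are exactly the (tacit) bookkeeping in the paper's computation.
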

\newpage
\subsection{Вторая формула}
\begin{Zam}
    Пусть $a,b\in\mathbb{N}_0:$ $0\le a\le b$. Тогда
    $$\sum_{m=1}^n \binom{a+m-1}{b}=\binom{a+n}{b+1}.$$
\end{Zam}

В дальнейшем данная формула будет неоднократно использоваться в доказательствах.

\begin{Oboz}
Пусть $v\in\mathbb{YF}.$ Тогда количество двоек в $v$ обозначим за $d(v)$.
\end{Oboz}

\begin{Oboz}
Функцию
$$F: \quad \left\{(\varepsilon,v,i)\in \{\varepsilon\}\times \mathbb{YF}\times \mathbb{N}_0:\; i\in\overline{d(v)}\right\} \to \mathbb{N}$$
определим рекурсивно следующим образом:
\begin{itemize}
    \item $$F(\varepsilon,\varepsilon,0)=1;$$
    \item $$F(\varepsilon,1v,i)=F(\varepsilon,v,i);$$
    \item $$F(\varepsilon,2v,0)=(|v|+1)\cdot F(\varepsilon,v,0);$$
    \item При $i\in\overline{1,d(v)}$
    $$F(\varepsilon,2v,i)=(|v|+1-i) \cdot F(\varepsilon,v,i) + (d(v)+2-i) \cdot F(\varepsilon,v,i-1) ;$$
    \item $$F(\varepsilon,2v,d(2v))= F(\varepsilon,v,d(v)).$$
\end{itemize}
\end{Oboz}

Функция $F$ позволяет найти формулу для количества путей 
в графе $\overline{\mathbb{YF}}$ из произвольной вершины
в корень.

\begin{theorem}
Пусть $v\in\mathbb{YF},$ $n\in\mathbb{N}$. Тогда 
$$D(\varepsilon,v,n)=\sum_{i=0}^{d(v)}F(\varepsilon,v,i)\binom{\#v+n-1}{|v|-i}.$$
\end{theorem}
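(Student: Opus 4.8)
The plan is to prove the formula by structural induction on the word $v$ (equivalently, induction on $\#v$), reducing the two cases $v\mapsto 1v$ and $v\mapsto 2v$ to the inductive hypothesis via Theorem~1 and collapsing the resulting sums with the identity $\sum_{m=1}^n\binom{a+m-1}{b}=\binom{a+n}{b+1}$ from the Remark preceding this theorem. At the outset I would record the elementary identity $|v|=\#v+d(v)$, which is what guarantees that the index conditions $0\le a\le b$ in that Remark are met below. The base case $v=\varepsilon$ is immediate: $\#\varepsilon=|\varepsilon|=d(\varepsilon)=0$, so the right-hand side is $F(\varepsilon,\varepsilon,0)\binom{n-1}{0}=1=D(\varepsilon,\varepsilon,n)$.

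For the step $v\mapsto 1v$: by Theorem~1.1 we have $D(\varepsilon,1v,n)=\sum_{m=1}^n D(\varepsilon,v,m)$; substituting the inductive formula, interchanging the two summations, and applying the Remark with $a=\#v$, $b=|v|-i$ (legitimate since $i\le d(v)$ forces $\#v\le|v|-i$ by $|v|=\#v+d(v)$) gives $D(\varepsilon,1v,n)=\sum_{i=0}^{d(v)}F(\varepsilon,v,i)\binom{\#v+n}{|v|-i+1}$. Since $\#(1v)=\#v+1$, $|1v|=|v|+1$, $d(1v)=d(v)$ and $F(\varepsilon,1v,i)=F(\varepsilon,v,i)$, this is precisely the claimed expression for $1v$.

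The case $v\mapsto 2v$ is the heart of the argument. By Theorem~1.2, $D(\varepsilon,2v,n)=\sum_{m=1}^n m\cdot D(\varepsilon,v,m)$, so after substituting and swapping sums one must evaluate $\sum_{m=1}^n m\binom{\#v+m-1}{|v|-i}$. The key elementary manipulation is $m\binom{a+m-1}{b}=(b+1)\binom{a+m-1}{b+1}+(b-a+1)\binom{a+m-1}{b}$ (checked by expanding both sides), after which the Remark applied to each of the two resulting sums yields $(|v|-i+1)\binom{\#v+n}{|v|-i+2}+(d(v)-i+1)\binom{\#v+n}{|v|-i+1}$, where $|v|-\#v=d(v)$ is used again to simplify the second coefficient.

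It then remains to substitute this back, reindex the first family of binomials by $j=i$ and the second by $j=i+1$, and collect the coefficient of $\binom{\#v+n}{|v|+2-j}=\binom{\#(2v)+n-1}{|2v|-j}$ for each $j\in\overline{0,d(2v)}$, where $\#(2v)=\#v+1$, $|2v|=|v|+2$, $d(2v)=d(v)+1$. One gets $F(\varepsilon,v,0)(|v|+1)$ at $j=0$, the mixed term $F(\varepsilon,v,j)(|v|+1-j)+F(\varepsilon,v,j-1)(d(v)+2-j)$ for $1\le j\le d(v)$, and $F(\varepsilon,v,d(v))$ at $j=d(v)+1$, which are exactly the three clauses defining $F(\varepsilon,2v,j)$. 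The main obstacle is not any single computation but this final bookkeeping: one has to keep careful track of the ranges of the two reindexed sums so that the boundary clauses of the definition of $F$ (at $i=0$ and at $i=d(2v)$) line up with the extreme values of $j$, where exactly one of the two contributions drops out.
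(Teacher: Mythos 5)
Your proposal is correct and follows essentially the same route as the paper: induction on the word, the recurrences of Theorem~1 for the prefixes $1v$ and $2v$, the identity $m\binom{a+m-1}{b}=(b+1)\binom{a+m-1}{b+1}+(b-a+1)\binom{a+m-1}{b}$ (which, with $a=\#v$, $b=|v|-i$ and $|v|=\#v+d(v)$, is exactly the splitting used in the paper), the summation formula from the Remark, and the final reindexing that matches the three defining clauses of $F(\varepsilon,2v,\cdot)$. No gaps.
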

\begin{proof}

Индукция по $|v|$.

База: $v=\varepsilon$ -- очевидно.

\renewcommand{\labelenumi}{\arabic{enumi}$^\circ$}

Переход:

\begin{enumerate}
    \item к $1v:$ 
$$D(\varepsilon,1v,n)=\sum_{m=1}^n D(\varepsilon,v,m)=\sum_{m=1}^n\sum_{i=0}^{d(v)}F(\varepsilon,v,i)\binom{\#v+m-1}{|v|-i}=\sum_{i=0}^{d(v)}F(\varepsilon,1v,i)\binom{\#(1v)+n-1}{|1v|-i}.$$

    \item к $2v:$ 
$$D(\varepsilon,2v,n)=\sum_{m=1}^n m\cdot D(\varepsilon,v,m)=\sum_{m=1}^n m \cdot\sum_{i=0}^{d(v)}F(\varepsilon,v,i)\binom{\#v+m-1}{|v|-i}=$$
$$= \sum_{i=0}^{d(v)}F(\varepsilon,v,i) \sum_{m=1}^n m \cdot \frac{ (m+\#v-1)\cdot \ldots \cdot (m-d(v)+i) }{(|v|-i)!}=$$
$$=\sum_{i=0}^{d(v)}F(\varepsilon,v,i) \sum_{m=1}^n \left( (|v|+1-i) \binom{\#v+m-1}{|v|+1-i} + (d(v)+1-i) \binom{\#v+m-1}{|v|-i} \right)=$$
$$=\sum_{i=0}^{d(v)}F(\varepsilon,v,i) \left( (|v|+1-i) \binom{\#v+n}{|v|+2-i} + (d(v)+1-i) \binom{\#v+n}{|v|+1-i} \right)=$$
$$=\sum_{i=0}^{d(2v)}F(\varepsilon,2v,i)\binom{\#(2v)+n-1}{|2v|-i}.$$
\end{enumerate}

\end{proof}

\begin{Oboz}
    Пусть $v,w\in\mathbb{YF}$. Тогда
$v_w$ -- это такое слово, которое получается из $v$ удалением общего суффикса с $w$.
\end{Oboz}

\begin{Zam}
    Пусть $n\in\mathbb{N},$ $w,v\in\mathbb{YF}$. Тогда
    \begin{itemize}
        \item
        $$ d(v_w) \ge \#w_v \Longleftrightarrow d(w,v)\ge 1 \Longleftrightarrow D(w,v,n) \ge 1; $$
        \item $$ d(v_w) = \#w_v \Longleftrightarrow d(w,v)= 1 \Longleftrightarrow D(w,v,n) = \binom{|v|-|w|+n-1}{|v|-|w|}. $$
   \end{itemize}
\end{Zam}

\begin{Oboz}
Распространим определённую ранее функцию $F(\varepsilon,w,i)$ на более широкую
область определения:
$$F: \quad \left\{(w,v,i)\in \mathbb{YF} \times \mathbb{YF}\times \mathbb{N}_0:\; d(v_w) \ge \#w_v,\;  i\in\overline{d(v_w) - \#w_v}\right\} \to \mathbb{N}.$$
При $w=\varepsilon$ функция была определена рекурсивно выше, а при $w\ne \varepsilon$ 
функция $F$ определяется рекурсивно следующим образом:

\renewcommand{\labelitemii}{$\bullet$}

\begin{itemize}
    \item Если $d(v_w)= \#w_v$, то
    $$F(w,v,0)=1;$$
    \item Если $d(v_w)\ge \#w_v$, то $$F(w,1v,i)=F(w,v,i);$$
    \item
    Если $d(v_w)\ge \#w_v$, $w=1w'$, $w_v\ne\varepsilon$ или $w=2w'$, то
    \begin{itemize}
        \item     $$F(w,2v,0)=F(w',v,0)+(|v_w|-|w_v|+1)\cdot F(w,v,0);$$
        \item При    $i\in\overline{1, d(v_w)-\#w_v}$ 
        $$F(w,2v,i)=F(w',v,i)+(|v_w|-|w_v|+1-i) \cdot F(w,v,i) + (d(v_w)-\#w_v+1-i) \cdot F(w,v,i-1) ;$$
        \item     $$F(w,2v,d((2v)_w)-\#w_{2v})= F(w',v,d(v_{w'})-\#w_v');$$
    \end{itemize}
    \item
    Если $d(v_w)\ge \#w_v$, $w=1w'$, $w_v=\varepsilon$, то
    \begin{itemize}
    \item
    $$F(w,2v,0)=F(w',v,0)+(|v_w|+1)\cdot F(w,v,0);$$
    \item При $i\in\overline{1,\ldots, d(v_w)}$
    $$F(w,2v,i)=F(w',v,i)+F(w',v,i-1)+(|v_w|+1-i) \cdot F(w,v,i) + (d(v_w)+1-i) \cdot F(w,v,i-1) ;$$
    \item 
    $$F(w,2v,d((2v)_w))= F(w',v,d(v_{w'})).$$
    \end{itemize}

\end{itemize}
\end{Oboz}

\begin{theorem}
Пусть $n\in\mathbb{N},$ $w,v\in\mathbb{YF}:$ $d(v_w)\ge \#w_v$. Тогда
$$D(w,v,n)=\sum_{i=0}^{d(v_w)-\# w_v }F(w,v,i)\binom{\#v_w-d(w_v)+n-1}{|v_w|-|w_v|- i }.$$    
\end{theorem}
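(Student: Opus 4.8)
The plan is to prove the formula by induction on $|w|$, paralleling the recursive definition of the extended function $F(w,v,i)$ and mirroring the structure of the proof of the earlier theorem for $w=\varepsilon$. The base case $w=\varepsilon$ is exactly the already-proved theorem. For the inductive step I would distinguish the two ways $w$ can be built: $w=1w'$ and $w=2w'$, and within each case I would run a secondary induction on $|v|$ (or on $|v_w|$, the part of $v$ not cancelled against $w$), again splitting according to whether $v$ starts with $1$ or $2$.

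The main computational engine is the recursion from the first subsection. First I would treat the case $w=2w'$: using Theorem on $D(2w',1v,n)$ and $D(2w',2v,n)$, I reduce $D(2w',v,n)$ to combinations of $D(2w',v',n)$-type terms (where $v=1v'$ or $v=2v'$) and of $D(w',v',n)$-type terms coming from the $D(2w,2v,n)=D(w,v,n)+\ldots$ identity. Into each of these I substitute the inductive hypothesis, rewrite the products $m\cdot(\ldots)$ appearing under the sums as integer-coefficient combinations of binomial coefficients $\binom{\#v_w-d(w_v)+m-1}{\cdot}$ (exactly the manipulation $m\binom{\cdot+m-1}{j}=(j+1)\binom{\cdot+m-1}{j+1}+(\text{const})\binom{\cdot+m-1}{j}$ used before), then apply the hockey-stick identity $\sum_{m=1}^n\binom{a+m-1}{b}=\binom{a+n}{b+1}$ from the Remark to collapse the sums over $m$. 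Collecting coefficients of each $\binom{\#(v)_w-d(w_v)+n-1}{\cdot-i}$ should reproduce precisely the right-hand side of the recursion defining $F(w,v,i)$ — the term $F(w',v,i)$ arising from the $D(w',v',n)$ contribution, and the $(|v_w|-|w_v|+1-i)F(w,v,i)+(d(v_w)-\#w_v+1-i)F(w,v,i-1)$ part arising from the main sum.

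The case $w=1w'$ is analogous but uses the other Theorem: $D(1w',1v,n)=\mathbbm{1}_{w'=v}+\sum D(1w',v,m)$ and $D(1w',2v,n)=\sum D(w',v,m)+\sum(m-1)D(1w',v,m)$. Here I must be careful about the subtle split in the definition of $F(1w',\cdot,\cdot)$ according to whether $w_v=\varepsilon$ (i.e. $w'$ is entirely a suffix of $v$, so the common-suffix cancellation eats all of $w'$) or $w_v\ne\varepsilon$; this is what produces the extra $F(w',v,i-1)$ summand in the $w_v=\varepsilon$ branch, and tracking exactly when the indicator $\mathbbm{1}_{w'=v}$ or the boundary clauses of $F$ kick in is where the bookkeeping is most delicate. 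One also has to check the index-range consistency: that $d(v_w)\ge \#w_v$ is preserved under the reductions (using the Remark characterizing $d(w,v)\ge 1$), and that the top-index boundary cases ($i=d((2v)_w)-\#w_{2v}$, where $F$ is defined by the special clause $F(w,2v,\cdot)=F(w',v,\cdot)$) match up — these correspond to the highest binomial coefficient $\binom{\cdot}{0}=1$ and need the binomial identities to degenerate correctly.

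The hard part will not be any single identity but the case analysis itself: there are essentially four top-symbol combinations for $(w,v)$, each with boundary subcases at $i=0$ and at $i=d(v_w)-\#w_v$, plus the $w_v=\varepsilon$ versus $w_v\ne\varepsilon$ dichotomy for $w=1w'$, and one must verify that the recursion for $F$ was \emph{defined} precisely so that all of these close up. I would organize the write-up so that the generic interior step ($0<i<$ top) is done once in full, and the boundary cases are dispatched by noting which binomial coefficients vanish or equal $1$; the main risk is an off-by-one in the arguments $\#v_w-d(w_v)+n-1$ and $|v_w|-|w_v|-i$ when a leading $1$ or $2$ is cancelled against the suffix of $w$, so I would fix notation for $v_w,w_v$ carefully at the outset and check it against the small cases implicit in the first subsection's theorems.
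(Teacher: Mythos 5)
Your proposal follows essentially the same route as the paper: outer induction on $|w|$ with the $w=\varepsilon$ theorem as base, inner induction on $|v|$ anchored at the case $d(v_w)=\#w_v$, a case split on the leading symbols of $v$ and $w$ together with the $w_v=\varepsilon$ versus $w_v\ne\varepsilon$ dichotomy, and the same computational engine (the recursions for $D$, the rewriting of $m\binom{a+m-1}{b}$ as a combination of binomials, and the hockey-stick identity) to match the recursive clauses defining $F(w,v,i)$. The details you flag as delicate (boundary indices, the extra $F(w',v,i-1)$ term when $w_v=\varepsilon$) are exactly the points the paper's computation handles, so the plan is sound.
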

\begin{proof}

Индукция по $|w|$.

База $w=\varepsilon$ была доказана.

Переход  к $w\ne \varepsilon$:

\renewcommand{\labelenumi}{\arabic{enumi}$^\circ$}

    Зафиксируем $w$ и будем доказывать теорему индукцией по $|v|$.
    
    База: $v\in\mathbb{YF}:$ $d(v_{w})=\#w_v$. Следует из Обозначения. 

    Переход к $v_0v:$ $d(v_{w})\ge\#w_v,$ $v_0\in\{1,2\}$. Рассмотрим четыре случая:
\begin{enumerate}
    \item $v_0=1$:
    $$D(w,1v,n)=\sum_{m=1}^{n} D(w,v,m)=$$
    $$=\sum_{m=1}^n\sum_{i=0}^{d(v_w)-\#w_{v}}F(w,v,i)\binom{\#v_w-d(w_{v}) +m-1}{|v_w|-|w_v|-i}=$$
    $$=\sum_{i=0}^{d((1v)_w)-\#w_{1v}}F(w,1v,i)\binom{\#(1v)_w-d(w_{1v}) +n-1}{|(1v)_w|-|w_{1v}|-i}.$$
    \item $v_0=2$, $w=1w'$, $w_v\ne\varepsilon$:
    $$ D(w,2v,n)=D(1w',2v,n)=\sum_{m=1}^n  D(w',v,m)+\sum_{m=1}^n (m-1)\cdot D(w,v,m)=$$
    $$=\sum_{m=1}^n\sum_{i=0}^{d(v_{w'})-\# w'_v }F({w'},v,i)\binom{\#v_{w'}-d(w'_v)+m-1}{|v_{w'}|-|w'_v|- i }+$$
    $$+ \sum_{m=1}^n (m-1) \sum_{i=0}^{d(v_w)-\#w_v}F(w,v,i)\binom{\#v_w-d(w_v) +m-1}{|v_w|-|w_v|-i}=$$
    $$=\sum_{i=0}^{d(v_{w})-\#w_v+1 }\sum_{m=1}^nF(w',v,i)\binom{\#v_{w}-d(w_v)+m-1}{|v_{w}|-|w_v|+1- i }+$$ 
    $$+\sum_{i=0}^{d(v_w)-\#w_v}F(w,v,i) \sum_{m=1}^n (m-1) \cdot \frac{ (m+\#v_w-d(w_v)-1)\cdot \ldots \cdot (m-d(v_w)+\#w_v+i) }{(|v_w|-|w_v|-i)!}=$$
    $$=\sum_{i=0}^{d(v_{w})-\#w_v+1 }F(w',v,i)\binom{\#v_{w}-d(w_v)+n}{|v_{w}|-|w_v|+2- i }+$$ 
    $$+\sum_{i=0}^{d(v_w)-\#w_v}F(w,v,i) \sum_{m=1}^n \left( (|v_w|-|w_v|+1-i) \binom{\#v_w-d(w_v)+m-1}{|v_w|-|w_v|+1-i}\right. +$$
    $$\left. +(d(v_w)-\#w_v-i) \binom{\#v_w-d(w_v)+m-1}{|v_w|-|w_v|-i} \right)=$$
    $$=\sum_{i=0}^{d((2v)_w)-\# w_{2v} }F(w,2v,i)\binom{\#(2v)_w-d(w_{2v})+n-1}{|(2v)_w|-|w_{2v}|- i }.$$
    
    \item $v_0=2$, $w=1w'$, $w_v=\varepsilon$:
    $$ D(w,2v,n)=D(1w',2v,n)=\sum_{m=1}^n  D(w',v,m)+\sum_{m=1}^n (m-1)\cdot D(w,v,m)=$$
    $$=\sum_{m=1}^n\sum_{i=0}^{d(v_{w'})-\# w'_v }F({w'},v,i)\binom{\#v_{w'}-d(w'_v)+m-1}{|v_{w'}|-|w'_v|- i }+$$
    $$+ \sum_{m=1}^n (m-1) \sum_{i=0}^{d(v_w)-\#w_v}F(w,v,i)\binom{\#v_w-d(w_v) +m-1}{|v_w|-|w_v|-i}=$$
    $$=\sum_{i=0}^{d(v_{w})}\sum_{m=1}^nF(w',v,i)\binom{\#v_{w}+m}{|v_{w}|+1- i }+$$ 
    $$+\sum_{i=0}^{d(v_w)}F(w,v,i) \sum_{m=1}^n (m-1) \cdot \frac{ (m+\#v_w-1)\cdot \ldots \cdot (m-d(v_w)+i) }{(|v_w|-i)!}=$$
    $$=\sum_{i=0}^{d(v_{w}) }F(w',v,i)\left(\binom{\#v_{w}+n}{|v_{w}|+2- i }+\binom{\#v_{w}+n}{|v_{w}|+1- i }\right)+$$ 
    $$+\sum_{i=0}^{d(v_w)}F(w,v,i) \sum_{m=1}^n \left( (|v_w|+1-i) \binom{\#v_w+m-1}{|v_w|+1-i} +(d(v_w)-i) \binom{\#v_w+m-1}{|v_w|-i} \right)=$$
    $$=\sum_{i=0}^{d((2v)_w)-\# w_{2v} }F(w,2v,i)\binom{\#(2v)_w-d(w_{2v})+n-1}{|(2v)_w|-|w_{2v}|- i }.$$
    
    \item $v_0=2$, $w=2w'$:
    $$ D(w,2v,n)=D(2w',2v,n)=  D(w',v,n)+\sum_{m=1}^n (m-1)\cdot D(w,v,m)=$$
    $$=\sum_{i=0}^{d(v_{w'})-\# w'_v }F({w'},v,i)\binom{\#v_{w'}-d(w'_v)+n-1}{|v_{w'}|-|w'_v|- i }+$$
    $$+ \sum_{m=1}^n (m-1) \sum_{i=0}^{d(v_w)-\#w_v}F(w,v,i)\binom{\#v_w-d(w_v) +m-1}{|v_w|-|w_v|-i}=$$
    $$=\sum_{i=0}^{d(v_{w})-\#w_v+1 }F(w',v,i)\binom{\#v_{w}-d(w_v)+n}{|v_{w}|-|w_v|+2- i }+$$ 
    $$+\sum_{i=0}^{d(v_w)-\#w_v}F(w,v,i) \sum_{m=1}^n (m-1) \cdot \frac{ (m+\#v_w-d(w_v)-1)\cdot \ldots \cdot (m-d(v_w)+\#w_v+i) }{(|v_w|-|w_v|-i)!}=$$
    $$=\sum_{i=0}^{d(v_{w})-\#w_v+1 }F(w',v,i)\binom{\#v_{w}-d(w_v)+n}{|v_{w}|-|w_v|+2- i }+$$ 
    $$+\sum_{i=0}^{d(v_w)-\#w_v}F(w,v,i) \sum_{m=1}^n \left( (|v_w|-|w_v|+1-i) \binom{\#v_w-d(w_v)+m-1}{|v_w|-|w_v|+1-i}\right. +$$
    $$\left. +(d(v_w)-\#w_v-i) \binom{\#v_w-d(w_v)+m-1}{|v_w|-|w_v|-i} \right)=$$
    $$=\sum_{i=0}^{d((2v)_w)-\# w_{2v} }F(w,2v,i)\binom{\#(2v)_w-d(w_{2v})+n-1}{|(2v)_w|-|w_{2v}|- i }.$$

\end{enumerate}

\end{proof}

\newpage
\section{Описание мер}

\begin{Prop}
    Пусть $a\in\mathbb{Z},$ $n(m)\in\mathbb{N}_0^\infty:$ $n(m)\to\infty$ Тогда
    $$\frac{(n(m)+a)!}{n(m)!}\sim n(m)^a.$$
\end{Prop}

В дальнейшем данное утверждение будет неоднократно использоваться в доказательствах.

\begin{Oboz}
    Пусть $K\in\mathbb{N}_0$. Обозначим за 
    \begin{itemize}
        \item ${\mathbb{YF}}^K$ индуцированный подграф графа ${\mathbb{YF}}$ на вершинах $v\in{\mathbb{YF}}$, таких, что $d(v)\le K$;
        \item $\overline{\mathbb{YF}}^K$ индуцированный подграф графа $\overline{\mathbb{YF}}$ на вершинах вида $(v,n)\in\overline{\mathbb{YF}}$, таких, что $d(v)\le K$.
    \end{itemize}
\end{Oboz}

Пусть $K\in\mathbb{N}_0.$

Рассмотрим последовательность вершин $(v'(m),n'(m))=V'(m)\in\left(\overline{\mathbb{YF}}^K\right)^\infty,$ такую, что $n'(m)$ не ограничено сверху. Если $\forall (w,l)\in\overline{\mathbb{YF}}^K$ 
$$\exists \mu_{V'(m)}(w,l)=\lim_{m\to \infty} D(\varepsilon,w,l)\frac{D(w,v'(m),n'(m)-l)}{D(\varepsilon,v'(m),n'(m))},$$
то на пространстве бесконечных путей в графе $\overline{\mathbb{YF}}^K$ существует мера, такая, что мера всех путей, проходящих через вершину $(w,l)$ равна $\mu_{V'(m)}(w,l)$. Наша задача --- найти все такие меры.

Итак, у нас есть последовательность вершин $(v'(m),n'(m))=V'(m)\in\left(\overline{\mathbb{YF}}^K\right)^\infty,$ такая, что $n'(m)$ не ограничено сверху. Ясно, что у этой последовательности есть бесконечная подпоследовательность $(v(m),n(m))=V(m)\in\left(\overline{\mathbb{YF}}^K\right)^\infty,$ такая, что $n(m)\to\infty$ и 
$$v(m) = 1^{\beta_{k}(m)}21^{\beta_{k-1}(m)}2\ldots 21^{\beta_{i}(m)}2 \ldots 21^{\beta_1(m)}21^{\beta_0(m)},$$
где $k\in \overline{K},$ $\forall i\in\overline{K}\quad \beta_i(m)\to c_i\in\mathbb{N}_0\cup\infty$.

Рассмотрим два случая --- конечной и бесконечной
суммы $\sum c_i$. Первый случай рассматривается в следующей теореме.
\begin{theorem}\label{t1}
Пусть $(v(m),n(m))=V(m)\in\left(\overline{\mathbb{YF}}^K\right)^\infty,$  $n(m)\to\infty$, 
$$v(m) = 1^{\beta_{k}(m)}21^{\beta_{k-1}(m)}2\ldots 21^{\beta_{i}(m)}2 \ldots 21^{\beta_1(m)}21^{\beta_0(m)},$$
где $k\in \overline{K},$ $\forall i\in\overline{K}\quad \beta_i(m)\to c_i\in\mathbb{N}_0$. Тогда
$$\forall l\in \mathbb{N}_0 \quad \mu_{V(m)}(\varepsilon,l)=1, $$
а значит, 
$$\forall (w,l) \in \overline{\mathbb{YF}}^K: w\ne \varepsilon \quad \mu_{V(m)}(w,l)=0. $$
\end{theorem}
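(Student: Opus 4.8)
The claim is that when the limiting parameters $c_i$ are all finite, so that $v(m)$ stabilizes (up to finitely many possibilities) to a fixed word $v$ of bounded size, the Martin-type limit measure concentrates entirely on the trivial path $\varepsilon, \varepsilon, \varepsilon, \ldots$. The key point is that $n(m)\to\infty$ while $|v(m)|$ stays bounded, so the asymptotics of $D(\varepsilon,v(m),n(m))$ and of $D(w,v(m),n(m)-l)$ are governed entirely by the growth in $n$, and the ratio tends to $1$ only for $w=\varepsilon$.

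First I would reduce to the case where $v(m)$ is eventually constant: since each $\beta_i(m)\to c_i\in\mathbb{N}_0$ and $k\le K$ is bounded, for large $m$ we have $v(m)=v$ for a fixed word $v$ with $|v|=:N$ and $\#v=:L$, $d(v)=:d$. (If several subsequential limits occur one argues along each constant subsequence separately; the conclusion $\mu=1$ at $\varepsilon$ is the same.) Then I would invoke Theorem (the second formula):
$$D(\varepsilon,v,n)=\sum_{i=0}^{d}F(\varepsilon,v,i)\binom{L+n-1}{N-i},$$
and, for $w\ne\varepsilon$ with $d(v_w)\ge \#w_v$,
$$D(w,v,n-l)=\sum_{i=0}^{d(v_w)-\#w_v}F(w,v,i)\binom{\#v_w-d(w_v)+n-l-1}{|v_w|-|w_v|-i}.$$
Using $\binom{a+n}{b}\sim n^{b}/b!$ as $n\to\infty$ (the Proposition on factorial asymptotics), the dominant term of $D(\varepsilon,v,n)$ is $F(\varepsilon,v,0)\binom{L+n-1}{N}\sim n^{N}/N!$ (here I use that $F(\varepsilon,v,0)=1$, which follows by induction from the recursion defining $F$, since the case $d(v_\varepsilon)=\#\varepsilon_v$ forces the value $1$ and the $1v$- and $2v$-recursions preserve the leading coefficient being $1$ — this is exactly why the top binomial coefficient appears with coefficient $1$). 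Likewise $D(w,v,n-l)\sim C_w\, n^{|v_w|-|w_v|}$ for some constant $C_w>0$.

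Next, the decisive inequality: for $w\ne\varepsilon$ one has $|v_w|-|w_v|<N=|v|$, hence $\deg_n D(w,v,n-l) < \deg_n D(\varepsilon,v,n)$, so
$$\mu_{V(m)}(w,l)=D(\varepsilon,w,l)\lim_{m\to\infty}\frac{D(w,v,n(m)-l)}{D(\varepsilon,v,n(m))}=0.$$
To see $|v_w|-|w_v|<|v|$ whenever $w\ne\varepsilon$: removing the common suffix of $w$ and $v$ can only decrease sizes, so $|v_w|\le|v|$, and if $w\ne\varepsilon$ then either $|w_v|\ge 1$ (giving strict inequality) or $w_v=\varepsilon$, i.e. $w$ is a suffix of $v$, in which case $|v_w|=|v|-|w|<|v|$. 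For $w=\varepsilon$ the degrees match and the leading coefficients are both $F(\varepsilon,v,0)=1$ (note $D(\varepsilon,\varepsilon,l)=1$), giving $\mu_{V(m)}(\varepsilon,l)=1$ for every $l$. Finally, since the measure of paths through $(\varepsilon,l)$ is $1$ for all $l$ and $\varepsilon$ has a single preimage and a single image edge on the path, the paths avoiding $\varepsilon$ at some level have measure $0$, forcing $\mu_{V(m)}(w,l)=0$ for all $w\ne\varepsilon$; alternatively this already follows from the displayed limit being $0$.

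**Main obstacle.** The only genuinely delicate point is the bookkeeping showing $F(\varepsilon,v,0)=1$ (equivalently, that the leading binomial coefficient $\binom{L+n-1}{N}$ carries coefficient exactly $1$ in $D(\varepsilon,v,n)$) and that $C_w>0$ so that the limit is honestly $0$ rather than indeterminate — but $C_w>0$ is immediate from $D(w,v,n)\ge 1$ for all large $n$ (the remark preceding the second formula: $d(v_w)\ge\#w_v\iff D(w,v,n)\ge 1$), and $F(\varepsilon,v,0)=1$ is a one-line induction on $|v|$ from the recursion. Everything else is degree-counting in $n$ against the fixed polynomial degree $|v|$, which is exactly the mechanism that kills all non-root vertices in this "small $v$" regime.
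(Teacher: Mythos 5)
Your argument follows the paper's route: reduce to an eventually constant $v(m)=v$, apply the explicit formula $D(\varepsilon,v,n)=\sum_i F(\varepsilon,v,i)\binom{\#v+n-1}{|v|-i}$, and observe that $D(\varepsilon,v,n-l)/D(\varepsilon,v,n)\to 1$ because numerator and denominator are the same polynomial-type expression in $n$ up to a shift by $l$; the paper then obtains $\mu_{V(m)}(w,l)=0$ for $w\ne\varepsilon$ as an immediate consequence of the total mass at level $l$ being $1$, while you also verify it directly by degree counting ($|v_w|-|w_v|<|v|$ for $w\ne\varepsilon$, plus positivity of the leading constant), which is a legitimate, slightly more explicit variant. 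One correction: the claim $F(\varepsilon,v,0)=1$, which you single out as the delicate point, is false --- the recursion gives $F(\varepsilon,2v,0)=(|v|+1)\cdot F(\varepsilon,v,0)$, so for instance $F(\varepsilon,21,0)=2$ (consistent with $D(\varepsilon,21,n)=\sum_{m=1}^{n}m^2\sim n^3/3$, whereas $\binom{n+1}{3}\sim n^3/6$). Fortunately nothing in your proof actually needs this: for $\mu_{V(m)}(\varepsilon,l)=1$ it suffices that the leading coefficients of numerator and denominator coincide (both equal $F(\varepsilon,v,0)/|v|!$, whatever that number is), and for $w\ne\varepsilon$ only the strict drop in degree matters. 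With that side remark deleted, the proof is correct and essentially the paper's.
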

\begin{proof}

    В данном случае
    $$v(m)\to v = 1^{c_k}21^{c_{k-1}}2\ldots 21^{c_{i}}2 \ldots 21^{c_1}21^{c_0},$$ а значит, при $l\in\mathbb{N}_0$

    $$\mu_{V(m)}(\varepsilon,l) = \lim_{m \to \infty} D(\varepsilon,\varepsilon,l)\frac{D(\varepsilon,v(m),n(m)-l)}{D(\varepsilon,v(m),n(m))}=$$
    $$=\lim_{m \to \infty} \frac{D(\varepsilon,v(m),n(m)-l)}{D(\varepsilon,v(m),n(m))}=\lim_{m \to \infty} \frac{\displaystyle \sum_{i=0}^{d(v)}F(\varepsilon,v,i)\binom{\#v+n(m)-l-1}{|v|- i }}{\displaystyle \sum_{i=0}^{d(v)}F(\varepsilon,v,i)\binom{\#v+n(m)-1}{|v|- i }}=$$
    $$=\lim_{m \to \infty} \frac{\displaystyle \sum_{i=0}^{d(v)}F(\varepsilon,v,i)\frac{(\#v+n(m)-l-1)!}{(|v|- i)!(n(m)-d(v)-l-1+i)! }}{\displaystyle \sum_{i=0}^{d(v)}F(\varepsilon,v,i)\frac{(\#v+n(m)-1)!}{(|v|-i)!(n(m)-d(v)-1+i)! }}= \lim_{m \to \infty} \frac{\displaystyle \sum_{i=0}^{d(v)} \frac{F(\varepsilon,v,i)}{(|v|- i)!}\cdot n(m)^{|v|-i}}{\displaystyle \sum_{i=0}^{d(v)} \frac{F(\varepsilon,v,i)}{(|v|- i)!}\cdot n(m)^{|v|-i}}=1.$$
   
\end{proof}

Переходим к рассмотрению второго случая: $\sum c_i=\infty$. 
\begin{Lemma}\label{lem1}
Пусть $(v(m),n(m))=V(m)\in\left(\overline{\mathbb{YF}}^K\right)^\infty,$  $n(m)\to\infty$, 
$$v(m) = 1^{\beta_{k}(m)}21^{\beta_{k-1}(m)}2\ldots 21^{\beta_{i}(m)}2 \ldots 21^{\beta_1(m)}21^{\beta_0(m)},$$
где $k\in \overline{K},$ $\forall i\in\overline{k}\quad \beta_i(m)\to c_i\in\mathbb{N}_0\cup\infty$, $\sum_{i=0}^k c_i= \infty,$ $\frac{n(m)}{n(m)+\#v(m)}\to p\in [0,1]$.  Тогда
$$\forall (w,l) \in \overline{\mathbb{YF}}^K\quad \mu_{V(m)}(w,l)=D(\varepsilon,w,l)\cdot p^l(1-p)^{|w|}\cdot\lim_{m\to\infty} \frac{\displaystyle \sum_{i=0}^{d(v_w)-\# w_{v} }F(w,v(m),i)\cdot(1-p)^i \cdot p^{d(v_w)-\#w_v-i} }{\displaystyle \sum_{i=0}^{d(v)}F(\varepsilon,v(m),i)\cdot(1-p)^i\cdot p^{d(v)-i}},
$$
где $\#w_v=\lim_{m\to\infty} \#w_{v(m)},$ $d(v_w)=\lim_{m\to\infty} d(v(m)_w),$ $d(v)=\lim_{m\to\infty} d(v(m))=k$.
\end{Lemma}
\begin{proof}

    $$ \lim_{m \to \infty} \frac{D(\varepsilon,v(m),n(m)-l)}{D(\varepsilon,v(m),n(m))}=$$
    $$=\lim_{m \to \infty} \frac{\displaystyle \sum_{i=0}^{d(v(m)_w)-\# w_{v(m)} }F(w,v(m),i)\binom{\#v(m)_w-d(w_{v(m)})+n(m)-l-1}{|v(m)_w|-|w_{v(m)}|- i }}{\displaystyle \sum_{i=0}^{d(v(m))}F(\varepsilon,v(m),i)\binom{\#v(m)+n(m)-1}{|v(m)|- i }}=$$
    $$=\lim_{m \to \infty} \frac{\displaystyle \sum_{i=0}^{d(v_w)-\# w_{v} }F(w,v(m),i)\binom{\#v(m)_w-d(w_{v})+n(m)-l-1}{|v(m)_w|-|w_{v}|- i }}{\displaystyle \sum_{i=0}^{d(v)}F(\varepsilon,v(m),i)\binom{\#v(m)+n(m)-1}{|v(m)|- i }}=$$
    $$=\lim_{m \to \infty}\frac{\displaystyle \sum_{i=0}^{d(v_w)-\# w_{v} }F(w,v(m),i)\frac{(\#v(m)_w-d(w_{v})+n(m)-l-1)!}{(|v(m)_w|-|w_{v}|- i)!\cdot(n(m)+\#w_v-d(v_w)-l-1+i)! }}{\displaystyle \sum_{i=0}^{d(v)}F(\varepsilon,v(m),i)\frac{(\#v(m)+n(m)-1)!}{(|v(m)|- i)!\cdot(n(m)-d(v)-1+i)! }}=$$
    $$=\lim_{m \to \infty}\left( \frac{\displaystyle \frac{(\#v(m)_w-d(w_{v})+n(m)-l-1)!}{(|v(m)_w|-|w_{v}|)!\cdot(n(m)-l-1)! }}{\displaystyle \frac{(\#v(m)+n(m)-1)!}{(|v(m)|)!\cdot(n(m)-1)! }}\cdot \frac{\displaystyle \sum_{i=0}^{d(v_w)-\# w_{v} }F(w,v(m),i)\cdot{\#v(m)^i \cdot n(m)^{d(v_w)-\#w_v-i} }}{\displaystyle \sum_{i=0}^{d(v)}F(\varepsilon,v(m),i)\cdot\#v(m)^i\cdot n(m)^{d(v)-i}}\right)=$$
    $$=\lim_{m \to \infty}\left(\frac{\#v(m)^{|w|}\cdot n(m)^{l}}{(\#v(m)+n(m))^{\#v(m)-\#v(m)_w+d(w_v)+l}}\cdot \frac{\displaystyle \sum_{i=0}^{d(v_w)-\# w_{v} }F(w,v(m),i)\cdot{\#v(m)^i \cdot n(m)^{d(v_w)-\#w_v-i} }}{\displaystyle \sum_{i=0}^{d(v)}F(\varepsilon,v(m),i)\cdot\#v(m)^i\cdot n(m)^{d(v)-i}}\right)=$$
    $$=\lim_{m \to \infty}\left(\frac{\#v(m)^{|w|}\cdot n(m)^{l}}{(\#v(m)+n(m))^{|w|+l}}\cdot \frac{\displaystyle \sum_{i=0}^{d(v_w)-\# w_{v} }F(w,v(m),i)\frac{{\#v(m)^i \cdot n(m)^{d(v_w)-\#w_v-i} }}{(\#v(m)+n(m))^{d(v_w)-\# w_{v} }}}{\displaystyle \sum_{i=0}^{d(v)}F(\varepsilon,v(m),i)\frac{\#v(m)^i\cdot n(m)^{d(v)-i}}{(\#v(m)+n(m))^{d(v)}}}\right)=$$
    $$=p^l(1-p)^{|w|}\cdot \lim_{m\to\infty}\frac{\displaystyle \sum_{i=0}^{d(v_w)-\# w_{v} }F(w,v(m),i)\cdot(1-p)^i \cdot p^{d(v_w)-\#w_v-i} }{\displaystyle \sum_{i=0}^{d(v)}F(\varepsilon,v(m),i)\cdot(1-p)^i\cdot p^{d(v)-i}}. $$
    
\end{proof}

\begin{Col}

\end{Col}
\begin{itemize}
    \item Если в Лемме $\ref{lem1}$ $\frac{n(m)}{n(m)+\#v(m)}\to 0$, то это будет вырожденная мера;

    \item Если в Лемме $\ref{lem1}$ $\frac{n(m)}{n(m)+\#v(m)}\to 1$, то это будет мера из Теоремы $\ref{t1}$.
\end{itemize}
\begin{Oboz}
Пусть $v\in\mathbb{YF},$ Тогда
$$Q_{\varepsilon,v}(p): (0,1)\to \mathbb{R}$$
-- это многочлен от $p$, определённый следующим образом (рекурсивно):
\begin{itemize}
    \item $$Q_{\varepsilon,\varepsilon}(p)=1;$$
    \item $$Q_{\varepsilon,1v}(p)=Q_{\varepsilon,v}(p);$$
    \item $$Q_{\varepsilon,2v}(p)=(1-p)\cdot(p\cdot Q_{\varepsilon,v}(p))'+(|v|+1)\cdot p \cdot Q_{\varepsilon,v}(p).$$
\end{itemize}
\end{Oboz}

\begin{theorem}
Пусть $v\in\mathbb{YF},$ Тогда
    $$Q_{\varepsilon,v}(p)=\sum_{i=0}^{d(v)}F(\varepsilon,v,i)\cdot(1-p)^i\cdot p^{d(v)-i}.$$
\end{theorem}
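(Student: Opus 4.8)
The plan is to prove the identity by induction on the length of the word $v$ (equivalently on $|v|$), matching the recursive definition of $Q_{\varepsilon,v}$ clause by clause with the recursive definition of $F(\varepsilon,v,i)$. Write $P_v(p):=\sum_{i=0}^{d(v)}F(\varepsilon,v,i)\,(1-p)^i p^{d(v)-i}$ for the right-hand side; the goal is $Q_{\varepsilon,v}=P_v$.

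For the base case $v=\varepsilon$ both sides equal $1$. For $v=1v'$ the claim is immediate: $d(1v')=d(v')$ and $F(\varepsilon,1v',i)=F(\varepsilon,v',i)$, so $P_{1v'}=P_{v'}=Q_{\varepsilon,v'}=Q_{\varepsilon,1v'}$ by the induction hypothesis and the $Q$-recursion.

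The substantive case is $v=2v'$. Set $d:=d(v')$, $L:=|v'|$, $f_i:=F(\varepsilon,v',i)$, so that by the induction hypothesis $Q_{\varepsilon,v'}(p)=\sum_{i=0}^d f_i(1-p)^i p^{d-i}$. I would substitute this into $Q_{\varepsilon,2v'}(p)=(1-p)\bigl(pQ_{\varepsilon,v'}(p)\bigr)'+(L+1)\,p\,Q_{\varepsilon,v'}(p)$, compute the derivative termwise via $\bigl(pQ_{\varepsilon,v'}(p)\bigr)'=\sum_i f_i\bigl[-i(1-p)^{i-1}p^{d+1-i}+(d+1-i)(1-p)^i p^{d-i}\bigr]$, and then collect the outcome as a combination of the monomials $(1-p)^j p^{d+1-j}$ for $j=0,\dots,d+1$. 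One finds that the coefficient of $(1-p)^j p^{d+1-j}$ equals $(L+1)f_0$ for $j=0$, equals $(L+1-j)f_j+(d+2-j)f_{j-1}$ for $1\le j\le d$, and equals $f_d$ for $j=d+1$. These are precisely the values $F(\varepsilon,2v',j)$ prescribed by the definition (recalling $d(2v')=d+1$), so $Q_{\varepsilon,2v'}=P_{2v'}$, which closes the induction.

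The only delicate point is the bookkeeping at the two boundary indices $j=0$ and $j=d+1$: the term $-i(1-p)^{i-1}p^{d+1-i}$ coming from the derivative raises the exponent of $(1-p)$ by one after multiplication by $(1-p)$, so the $j=0$ coefficient receives no contribution from it (matching the special clause $F(\varepsilon,2v,0)=(|v|+1)F(\varepsilon,v,0)$), while the $j=d+1$ coefficient comes solely from the $i=d$ term of the $(d+1-i)$ piece (matching $F(\varepsilon,2v,d(2v))=F(\varepsilon,v,d(v))$). Checking that these edge cases agree with the first and last clauses of the definition of $F(\varepsilon,2v,\cdot)$ is the part that requires care; the rest is a routine reindexing, and no further idea is needed.
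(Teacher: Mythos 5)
Your proposal is correct and follows essentially the same route as the paper: induction on $|v|$, with the $2v$ case handled by substituting the induction hypothesis into the recursion for $Q_{\varepsilon,2v}$, differentiating termwise, and matching the coefficient of each $(1-p)^j p^{d+1-j}$ against the recursive definition of $F(\varepsilon,2v,j)$, including the two boundary indices. The coefficient bookkeeping you describe agrees with the paper's computation.
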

\begin{proof}
    Индукция по $|v|$.

    База: $v=\varepsilon$ -- очевидно.

    Переход:
    \begin{itemize}
        \item к $1v$ -- очевидно 
        \item к $2v$:
        $$Q_{\varepsilon,2v}(p)= (1-p)\cdot\left(p\cdot \sum_{i=0}^{d(v)}F(\varepsilon,v,i)\cdot(1-p)^i\cdot p^{d(v)-i}\right)'+(|v|+1)\cdot p \cdot \sum_{i=0}^{d(v)}F(\varepsilon,v,i)\cdot(1-p)^i\cdot p^{d(v)-i}=$$
        $$=(1-p)\cdot\left(\sum_{i=0}^{d(v)}F(\varepsilon,v,i)\cdot(1-p)^i\cdot p^{d(v)+1-i}\right)'+(|v|+1)\cdot \sum_{i=0}^{d(v)}F(\varepsilon,v,i)\cdot(1-p)^i\cdot p^{d(v)+1-i}=$$
        $$=(1-p)\cdot (d(v)+1)\cdot F(\varepsilon,v,0)\cdot p^{d(v)}+$$
        $$+(1-p)\cdot\sum_{i=1}^{d(v)}F(\varepsilon,v,i)\left((-i)(1-p)^{i-1}\cdot p^{d(v)+1-i}+(d(v)+1-i)\cdot(1-p)^i\cdot p^{d(v)-i}\right)+$$
        $$+(|v|+1)\cdot \sum_{i=0}^{d(v)}F(\varepsilon,v,i)\cdot(1-p)^i\cdot p^{d(v)+1-i}=$$
        $$=(|v|+1)\cdot p^{d(v)+1}\cdot F(\varepsilon,v,0)+$$
        $$+ \sum_{i=1}^{d(v)}((|v|+1-i)\cdot F(\varepsilon,v,i)+(d(v)+2-i)\cdot F(\varepsilon,v,i-1) )\cdot(1-p)^i\cdot p^{d(v)+1-i} + $$
        $$+F(\varepsilon,v,d(v))\cdot (1-p)^{d(v)+1}=\sum_{i=0}^{d(2v)}F(\varepsilon,2v,i)\cdot(1-p)^i\cdot p^{d(2v)-i}.$$

    \end{itemize}

\end{proof}

\begin{Oboz}

Пусть $w,v\in\mathbb{YF}:$ $w\ne\varepsilon,$ $d(v_w)\ge  \#w_v$. Тогда
$$Q_{w,v}(p): (0,1)\to \mathbb{R}$$
-- это многочлен от $p$, определённый следующим образом (рекурсивно):

 \renewcommand{\labelitemii}{$\bullet$}

\begin{itemize}
    \item Если $d(v_w)= \#w_v$, то
    $$ Q_{w,v}(p)=1;$$
    \item Если $d(v_w)\ge \#w_v$, то
    $$Q_{w,1v}(p)=Q_{w,v}(p);$$
    \item Если $d(v_w) \ge \#w_v$, $w=w_0w'$, $w_0\in\{1,2\}$, то
    $$Q_{w,2v}(p)=p\cdot(1-p)\cdot Q_{w,v}'(p)+(|v|+1-|w|)\cdot p \cdot Q_{w,v}(p)+Q_{w',v}(p).$$
\end{itemize}

\end{Oboz}

\begin{theorem}
Пусть $w,v\in\mathbb{YF}:$ $d(v_w) \ge  \#w_v$. Тогда
    $$Q_{w,v}(p)=\sum_{i=0}^{d(v_w)-\#w_v}F(w,v,i)\cdot(1-p)^i\cdot p^{d(v_w)-\#w_v-i}.$$
\end{theorem}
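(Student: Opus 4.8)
The plan is to prove the identity by exactly the same double induction used to establish the closed form of $D(w,v,n)$: an outer induction on $|w|$ and, for fixed $w$, an inner induction on $|v|$. The base of the outer induction, $w=\varepsilon$, is precisely the preceding theorem. The base of the inner induction is the case $d(v_w)=\#w_v$: there the defining recursion gives $Q_{w,v}(p)=1$, the sum on the right collapses to its single term $F(w,v,0)(1-p)^0p^0$, and $F(w,v,0)=1$ by the matching clause of the definition of $F$, so both sides equal $1$. Both sides are visibly polynomials (the right-hand side of degree $d(v_w)-\#w_v$), so it suffices to check that the right-hand side obeys the recursion defining $Q_{w,v}$.

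For the inner step there are two moves. The move $v\mapsto 1v$ is immediate: $Q_{w,1v}=Q_{w,v}$, $F(w,1v,i)=F(w,v,i)$, and $d((1v)_w)-\#w_{1v}=d(v_w)-\#w_v$ (in the relevant range $v_w\neq\varepsilon$), so the statement for $1v$ is literally the statement for $v$. The substantive move is $v\mapsto 2v$. Here I substitute the two inductive hypotheses
$$Q_{w,v}(p)=\sum_{i}F(w,v,i)(1-p)^ip^{d(v_w)-\#w_v-i},\qquad Q_{w',v}(p)=\sum_{i}F(w',v,i)(1-p)^ip^{d(v_{w'})-\#w'_v-i}$$
(the second legitimate because $|w'|<|w|$) into
$$Q_{w,2v}(p)=p(1-p)\,Q_{w,v}'(p)+(|v|+1-|w|)\,p\,Q_{w,v}(p)+Q_{w',v}(p),$$
differentiate the monomials $(1-p)^ip^{d(v_w)-\#w_v-i}$ by the product rule, multiply through by $p$ or by $p(1-p)$ as prescribed, and collect the coefficient of each monomial $(1-p)^ip^{d((2v)_w)-\#w_{2v}-i}$. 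Using $|v_w|-|w_v|=|v|-|w|$, the coefficient produced for the interior indices is $(|v_w|-|w_v|+1-i)\,F(w,v,i)+(d(v_w)-\#w_v+1-i)\,F(w,v,i-1)$ together with a term $F(w',v,i)$ (and, in the exceptional branch, also $F(w',v,i-1)$), which is exactly the recursive formula for $F(w,2v,i)$; the endpoint coefficients reproduce the special clauses $F(w,2v,0)=F(w',v,0)+(|v_w|-|w_v|+1)\,F(w,v,0)$ and $F(w,2v,d((2v)_w)-\#w_{2v})=F(w',v,d(v_{w'})-\#w_v')$, these being precisely the boundary terms thrown off by differentiating a finite sum.

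As in the $D(w,v,n)$ theorem the computation splits into the three sub-cases distinguished in the definition of $F$ — $w=2w'$; $w=1w'$ with $w_v\neq\varepsilon$; and $w=1w'$ with $w_v=\varepsilon$ — and in each the tracking of $d(v_w),\ \#w_v,\ |w_v|$ and of the top degree $d((2v)_w)-\#w_{2v}$ under $v\mapsto 2v$ runs in parallel with the already-established binomial argument, the telescoping identity $\sum_{m=1}^n\binom{a+m-1}{b}=\binom{a+n}{b+1}$ there being replaced here by the elementary fact that multiplying a monomial in $p$ and $1-p$ by $p$ and differentiating it shifts its exponents in the corresponding way. The only real difficulty is this case-and-index bookkeeping — in particular verifying that the exceptional branch $w=1w'$, $w_v=\varepsilon$, where the extra term $F(w',v,i-1)$ enters, comes out correctly — but since the whole argument is a transcription of the proof of the $D$-formula with the substitution just indicated, no new idea is needed.
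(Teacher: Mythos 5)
Your proposal matches the paper's proof essentially verbatim: the same outer induction on $|w|$ with base $w=\varepsilon$ given by the preceding theorem, the same inner induction on $|v|$ with base $d(v_w)=\#w_v$, the trivial step for $v\mapsto 1v$, and for $v\mapsto 2v$ the same substitution of both inductive hypotheses into the recursion for $Q_{w,2v}$, differentiation and collection of the coefficients of $(1-p)^ip^{d((2v)_w)-\#w_{2v}-i}$ to recover the recursive clauses for $F(w,2v,i)$, split into the same three sub-cases (including the exceptional branch $w=1w'$, $w_v=\varepsilon$ where $Q_{w',v}$ must be expanded by $p+(1-p)=1$ to produce the extra $F(w',v,i-1)$ term). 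No discrepancy to report.
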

\begin{proof}

Индукция по $|w|$.

База: $w=\varepsilon$ была доказана.

Переход  к $w\ne \varepsilon$:

\renewcommand{\labelenumi}{\arabic{enumi}$^\circ$}

    Зафиксируем $w$ и будем доказывать теорему индукцией по $|v|$.
    
    База: $v\in\mathbb{YF}:$ $d(v_{w})=\#w_v$. Следует из Обозначения. 

    Переход к $v_0v:$ $d(v_{w})\ge\#w_v,$ $v_0\in\{1,2\}$. Рассмотрим три случая:

\begin{enumerate}

    \item $v_0=1$ -- очевидно.

    \item $v_0=2,$ $w=1w'$, $w_v\ne\varepsilon$ или $w=2w':$
    $$Q_{\varepsilon,2v}(p)=p\cdot (1-p)\cdot\left( \sum_{i=0}^{d(v_w)-\#w_v}F(w,v,i)\cdot(1-p)^i\cdot p^{d(v_w)-\#w_v-i}\right)'+$$
    $$+(|v|+1-|w|)\cdot p \cdot \sum_{i=0}^{d(v_w)-\#w_v}F(w,v,i)\cdot(1-p)^i\cdot p^{d(v_w)-\#w_v-i}+$$
    $$+ \sum_{i=0}^{d(v_{w'})-\#w'_v}F(w',v,i)\cdot(1-p)^i\cdot p^{d(v_{w'})-\#w'_v-i}= $$
    $$= (d(v_w)-\#w_v)\cdot F(w,v,0)\cdot(1-p)\cdot p^{d(v_w)-\#w_v}+$$
    $$+\sum_{i=1}^{d(v_w)-\#w_v}F(w,v,i)\cdot\left((-i)\cdot(1-p)^i\cdot p^{d(v_w)-\#w_v+1-i}+\right.$$
    $$\left.+(d(v_w)-\#w_v-i)\cdot(1-p)^{1+i}\cdot p^{d(v_w)-\#w_v-i}\right)+$$
    $$+(|v|+1-|w|)\cdot \sum_{i=0}^{d(v_w)-\#w_v}F(w,v,i)\cdot(1-p)^i\cdot p^{d(v_w)-\#w_v+1-i}+$$
    $$+\sum_{i=0}^{d(v_{w'})-\#w'_v}F(w',v,i)\cdot(1-p)^i\cdot p^{d(v_{w'})-\#w'_v-i}=$$
    $$=\left(F(w',v,0)+(|v_w|-|w_v|+1)\cdot F(w,v,0)\right)\cdot p^{d(v_w)-\#w_v+1}+$$
    $$+ \sum_{i=1}^{d(v_w)-\#w_v}\left(F(w',v,i)+(|v_w|-|w_v|+1-i) \cdot F(w,v,i) +\right.$$
    $$\left.+ (d(v_w)-\#w_v+1-i) \cdot F(w,v,i-1)\right)\cdot(1-p)^i\cdot p^{d(v_w)-\#w_v+1-i} + $$
    $$+F(w',v,d(v_{w'})-\#w_v')\cdot (1-p)^{d(v_w)-\#w_v+1}=\sum_{i=0}^{d((2v)_w)-\#w_v}F(w,2v,i)\cdot(1-p)^i\cdot p^{d((2v)_w)-\#w_{2v}-i}.$$
        
    \item $v_0=2$, $w=1w'$, $w_v=\varepsilon$    
    $$Q_{\varepsilon,2v}(p)=p\cdot (1-p)\cdot\left( \sum_{i=0}^{d(v_w)}F(w,v,i)\cdot(1-p)^i\cdot p^{d(v_w)-i}\right)'+$$
    $$+(|v_w|+1)\cdot p \cdot \sum_{i=0}^{d(v_w)}F(w,v,i)\cdot(1-p)^i\cdot p^{d(v_w)-i}+$$
    $$+ \sum_{i=0}^{d(v_{w})}F(w',v,i)\cdot(1-p)^i\cdot p^{d(v_{w})-i}= $$
    $$= d(v_w)\cdot F(w,v,0)\cdot(1-p)\cdot p^{d(v_w)}+$$
    $$+\sum_{i=1}^{d(v_w)}F(w,v,i)\left((-i)(1-p)^i\cdot p^{d(v_w)+1-i}+(d(v_w)-i)\cdot(1-p)^{i+1}\cdot p^{d(v_w)-i}\right)+$$
    $$+(|v_w|+1)\cdot \sum_{i=0}^{d(v_w)}F(w,v,i)\cdot(1-p)^i\cdot p^{d(v_w)+1-i}+$$
    $$+\sum_{i=0}^{d(v_{w})}F(w',v,i)\cdot(1-p)^i\cdot p^{d(v_{w})+1-i}+\sum_{i=0}^{d(v_{w})}F(w',v,i)\cdot(1-p)^{i+1}\cdot p^{d(v_{w})-i}=$$
    $$=\left(F(w',v,0)+(|v_w|+1)\cdot F(w,v,0)\right)\cdot p^{d(v_w)+1}+$$
    $$+ \sum_{i=1}^{d(v_w)}(F(w',v,i)+F(w',v,i-1)+(|v_w|+1-i) \cdot F(w,v,i) + (d(v_w)+1-i) \cdot F(w,v,i-1))\cdot(1-p)^i\cdot p^{d(v_w)+1-i} + $$
    $$+F(w',v,d(v_{w}))\cdot (1-p)^{d(v_w)+1}=\sum_{i=0}^{d((2v)_w)-\#w_v}F(w,2v,i)\cdot(1-p)^i\cdot p^{d((2v)_w)-\#w_{2v}-i}.$$
\end{enumerate}

\end{proof}

\begin{Col}

$\;$

  Пусть $w,v\in\mathbb{YF}:$ $d(v_w)\ge  \#w_v$, $p\in(0,1)$. Тогда
$$Q_{w,v}(p)>0.$$

\end{Col}

\begin{Col}

Пусть $(v(m),n(m))=V(m)\in\left(\overline{\mathbb{YF}}^K\right)^\infty,$  $n(m)\to\infty$, 
$$v(m) = 1^{\beta_{k}(m)}21^{\beta_{k-1}(m)}2\ldots 21^{\beta_{i}(m)}2 \ldots 21^{\beta_1(m)}21^{\beta_0(m)},$$
где $k\in \overline{K},$ $\forall i\in\overline{k-1}\quad \beta_i(m)\to c_i\in\mathbb{N}_0$, $c_k \to \infty,$ $\frac{n(m)}{n(m)+\#v(m)}\to p\in (0,1)$. Кроме того, пусть $\#w_v=\lim_{m\to\infty} \#w_{v(m)},$ $d(v_w)=\lim_{m\to\infty} d(v(m)_w),$ $d(v)=\lim_{m\to\infty} d(v(m))=k$ и 
$$v^k = 21^{c_{k-1}}2\ldots 21^{c_i}2 \ldots 21^{c_1}21^{c_0}.$$
Тогда
\begin{itemize}
    \item $\forall (w,l) \in \overline{\mathbb{YF}}^K:  d(v_w^k)\ge  \#w_{v^k}$
    $$\mu_{V(m)}(w,l)=D(\varepsilon,w,l)\cdot p^l(1-p)^{|w|}\cdot\lim_{m\to\infty} \frac{\displaystyle \sum_{i=0}^{d(v_w)-\# w_{v} }F(w,v(m),i)\cdot(1-p)^i \cdot p^{d(v_w)-\#w_v-i} }{\displaystyle \sum_{i=0}^{d(v)}F(\varepsilon,v(m),i)\cdot(1-p)^i\cdot p^{d(v)-i}}=$$
$$=D(\varepsilon,w,l)\cdot p^l(1-p)^{|w|}\cdot \lim_{m\to\infty}\frac{\displaystyle Q_{w, v(m)}(p) }{\displaystyle Q_{\varepsilon,v(m)}(p) }=D(\varepsilon,w,l)\cdot p^l(1-p)^{|w|}\frac{\displaystyle Q_{w, v^k}(p) }{\displaystyle Q_{\varepsilon,v^k}(p) };
$$
    \item $\forall (w,l) \in \overline{\mathbb{YF}}^K:  w=1^Wv^k$ при $W\in\mathbb{N}$    $$\mu_{V(m)}(w,l)=D(\varepsilon,w,l)\cdot p^l(1-p)^{|w|}\cdot\lim_{m\to\infty} \frac{\displaystyle \sum_{i=0}^{d(v_w)-\# w_{v} }F(w,v(m),i)\cdot(1-p)^i \cdot p^{d(v_w)-\#w_v-i} }{\displaystyle \sum_{i=0}^{d(v)}F(\varepsilon,v(m),i)\cdot(1-p)^i\cdot p^{d(v)-i}}=$$
    $$=D(\varepsilon,w,l)\cdot p^l(1-p)^{|w|}\cdot \lim_{m\to\infty}\frac{\displaystyle Q_{w, v(m)}(p) }{\displaystyle Q_{\varepsilon,v(m)}(p) }=D(\varepsilon,w,l)\cdot p^l(1-p)^{|w|}\frac{\displaystyle 1 }{\displaystyle Q_{\varepsilon,v^k}(p) };$$

    \item $\forall (w,l) \in \overline{\mathbb{YF}}^K: d(v_w^k)<  \#w_{v^k}$ и $\nexists W\in\mathbb{N}:$ $ w=1^Wv^k$
    $$\mu_{V(m)}(w,l)=0.$$

\end{itemize}

\end{Col}
\begin{Col}

Пусть $(v(m),n(m))=V(m)\in\left(\overline{\mathbb{YF}}^K\right)^\infty,$  $n(m)\to\infty$, 
$$v(m) = 1^{\beta_{k}(m)}21^{\beta_{k-1}(m)}2\ldots 21^{\beta_{i}(m)}2 \ldots 21^{\beta_1(m)}21^{\beta_0(m)},$$
где $k\in \overline{K},$ $\forall i\in\overline{k}\quad \beta_i(m)\to c_i\in\mathbb{N}_0\cup\infty$, $\exists f\in\overline{k-1}:$ $\sum_{i=0}^{f-1} c_i < \infty$, $c_f=\infty$, $\frac{n(m)}{n(m)+\#v(m)}\to p\in (0,1)$. Кроме того, пусть $\#w_v=\lim_{m\to\infty} \#w_{v(m)},$ $d(v_w)=\lim_{m\to\infty} d(v(m)_w),$ $d(v)=\lim_{m\to\infty} d(v(m))=k$ и 
$$v^f = 21^{c_{f-1}}2\ldots 21^{c_i}2 \ldots 21^{c_1}21^{c_0}.$$
Тогда
\begin{itemize}
    \item $\forall (w,l) \in \overline{\mathbb{YF}}^K:  d(v_w^f)\ge  \#w_{v^f}$
    $$\mu_{V(m)}(w,l)=D(\varepsilon,w,l)\cdot p^l(1-p)^{|w|}\cdot\lim_{m\to\infty} \frac{\displaystyle \sum_{i=0}^{d(v_w)-\# w_{v} }F(w,v(m),i)\cdot(1-p)^i \cdot p^{d(v_w)-\#w_v-i} }{\displaystyle \sum_{i=0}^{d(v)}F(\varepsilon,v(m),i)\cdot(1-p)^i\cdot p^{d(v)-i}}=
$$
$$=D(\varepsilon,w,l)\cdot p^l(1-p)^{|w|}\cdot \lim_{m\to\infty}\frac{\displaystyle Q_{w, v(m)}(p) }{\displaystyle Q_{\varepsilon,v(m)}(p) }=$$
$$=D(\varepsilon,w,l)\cdot p^l(1-p)^{|w|}\cdot \lim_{m\to\infty}\frac{\displaystyle  \left(\prod_{j=0}^{k-f-1}\sum_{i=f}^{f+j}{\beta_i(m)}\right)  p^{k-f}\cdot Q_{w,v^f}(p)}{\displaystyle  \left(\prod_{j=0}^{k-f-1}\sum_{i=f}^{f+j}{\beta_i(m)}\right)  p^{k-f}\cdot Q_{\varepsilon,v^f}(p)}=$$
$$=D(\varepsilon,w,l)\cdot p^l(1-p)^{|w|}\frac{\displaystyle Q_{w, v^f}(p) }{\displaystyle Q_{\varepsilon,v^f}(p) };
$$
    \item $\forall (w,l) \in \overline{\mathbb{YF}}^K:  w=1^Wv^k$ при $W\in\mathbb{N}$    $$\mu_{V(m)}(w,l)=D(\varepsilon,w,l)\cdot p^l(1-p)^{|w|}\cdot\lim_{m\to\infty} \frac{\displaystyle \sum_{i=0}^{d(v_w)-\# w_{v} }F(w,v(m),i)\cdot(1-p)^i \cdot p^{d(v_w)-\#w_v-i} }{\displaystyle \sum_{i=0}^{d(v)}F(\varepsilon,v(m),i)\cdot(1-p)^i\cdot p^{d(v)-i}}=$$
    $$=D(\varepsilon,w,l)\cdot p^l(1-p)^{|w|}\cdot \lim_{m\to\infty}\frac{\displaystyle Q_{w, v(m)}(p) }{\displaystyle Q_{\varepsilon,v(m)}(p) }=$$
    $$=D(\varepsilon,w,l)\cdot p^l(1-p)^{|w|}\cdot \lim_{m\to\infty}\frac{\displaystyle  \left(\prod_{j=0}^{k-f-1}\sum_{i=f}^{f+j}{\beta_i(m)}\right)  p^{k-f}}{\displaystyle  \left(\prod_{j=0}^{k-f-1}\sum_{i=f}^{f+j}{\beta_i(m)}\right)p^{k-f}\cdot Q_{\varepsilon,v^f}(p)}=$$
    $$=D(\varepsilon,w,l)\cdot p^l(1-p)^{|w|}\frac{\displaystyle 1 }{\displaystyle Q_{\varepsilon,v^f} }.$$
\end{itemize}

А значит, эта мера уже было обнаружена в предыдущем Следствии.
\end{Col}

Таким образом, мы можем сделать следующий вывод:
\begin{theorem}
 Последовательности вершин $(v'(m),n'(m))=V'(m)\in\left(\overline{\mathbb{YF}}^K\right)^\infty$ порождают меры на пространстве бесконечных путей в графе $\overline{\mathbb{YF}}^K$, которые можно параметризовать двумя параметрами:
 \begin{itemize}
     \item $$v^k = 21^{c_{k-1}}2\ldots 21^{c_i}2 \ldots 21^{c_1}21^{c_0}\in\mathbb{YF}:$$
     $k\in \overline{K},$
     \item $$p\in (0,1).$$
 \end{itemize}
Значения этих мер следующие:
\begin{itemize}
    \item $\forall (w,l) \in \overline{\mathbb{YF}}^K:  d(v_w^f)\ge  \#w_{v^f}$
$$\mu_{v^k,p}(w,l)=D(\varepsilon,w,l)\cdot p^l\cdot (1-p)^{|w|}\cdot\frac{\displaystyle Q_{w,v^k} }{\displaystyle Q_{\varepsilon,v^k} };$$
\item $\forall (w,l) \in \overline{\mathbb{YF}}^K:  w=1^Wv^k$ при $W\in\mathbb{N}$
$$\mu_{v^k,p}(w,l)=D(\varepsilon,w,l)\cdot p^l\cdot (1-p)^{|w|}\cdot\frac{1}{\displaystyle Q_{\varepsilon,v^k} };$$
\item $\forall (w,l) \in \overline{\mathbb{YF}}^K: d(v_w^k)<  \#w_{v^k}$ и $\nexists W\in\mathbb{N}:$ $ w=1^Wv^k$
$$\mu_{v^k,p}(w,l)=0.$$
\end{itemize}

Иными словами, можно сказать, что $v^f$ может принимать следующий вид:
\begin{itemize}
    \item $2v^{kk}$, где $v^{kk} \in\mathbb{YF}^{K-1}$;
    \item $v^k=\varepsilon$.
\end{itemize}

А также есть меры, соответствующие случаям $p=0$ и $p=1$.

Очевидно, все эти меры различны.
\end{theorem}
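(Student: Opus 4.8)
The plan is to deduce the theorem by combining Theorem~\ref{t1}, Lemma~\ref{lem1}, and the two Corollaries that evaluate $\lim_m Q_{w,v(m)}(p)/Q_{\varepsilon,v(m)}(p)$, the whole argument organised around a compactness reduction. First I would take an arbitrary sequence $V'(m)=(v'(m),n'(m))$ in $\bigl(\overline{\mathbb{YF}}^K\bigr)^\infty$ with $n'(m)$ unbounded which generates a measure $\mu$, i.e. for which all limits $\mu_{V'(m)}(w,l)$ exist. Since then every subsequence of $V'(m)$ generates the same $\mu$, it is enough to compute the limits along a conveniently chosen subsequence: pass to $V(m)=(v(m),n(m))$ in the normal form $v(m)=1^{\beta_k(m)}2\cdots21^{\beta_0(m)}$ with $n(m)\to\infty$, $\beta_i(m)\to c_i\in\mathbb{N}_0\cup\{\infty\}$, and $k$ fixed (possible since $d(v(m))\le K$), and then, using compactness of $[0,1]$, to a further subsequence with $\frac{n(m)}{n(m)+\#v(m)}\to p\in[0,1]$.

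Then comes the case analysis. If $\sum_{i=0}^k c_i<\infty$ (which forces $p=1$), Theorem~\ref{t1} gives $\mu(\varepsilon,l)=1$ for every $l$; this is the measure recorded as ``$p=1$''. If $\sum_{i=0}^k c_i=\infty$, apply Lemma~\ref{lem1}; the Corollary following it disposes of $p=0$ (degenerate measure) and $p=1$ (again the measure of Theorem~\ref{t1}). For $p\in(0,1)$ I rewrite the ratio of sums in Lemma~\ref{lem1} by means of the two theorems that give $Q_{\varepsilon,v}(p)=\sum_i F(\varepsilon,v,i)(1-p)^ip^{d(v)-i}$ and the analogous expansion of $Q_{w,v}(p)$, obtaining
$$\mu(w,l)=D(\varepsilon,w,l)\,p^l(1-p)^{|w|}\lim_{m\to\infty}\frac{Q_{w,v(m)}(p)}{Q_{\varepsilon,v(m)}(p)},$$
and then apply the two Corollaries to the $Q_{w,v}$-theorem: writing $f$ for the least index with $c_f=\infty$, this limit equals $Q_{w,v^f}(p)/Q_{\varepsilon,v^f}(p)$, or $1/Q_{\varepsilon,v^f}(p)$ when $w=1^Wv^f$, or $0$ otherwise, where the stabilised tail $v^f=21^{c_{f-1}}2\cdots21^{c_0}$ is precisely the word denoted $v^k$ in the statement; stripping the leading run and absorbing every block above index $f$, $v^f$ is either $\varepsilon$ (when $f=0$) or of the form $2v^{kk}$ with $v^{kk}\in\mathbb{YF}^{K-1}$. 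The decisive point is that this limit exists and depends only on the pair $(v^f,p)$: that is what makes $V'(m)$ genuinely generate $\mu$, and it displays $\mu$ in the announced form. Nonnegativity follows from the Corollary $Q_{w,v}(p)>0$, and consistency on cylinders is inherited from the normalised uniform measures, so $\mu$ is indeed a central measure.

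It remains to check that each member of the list is realised and that the members are pairwise distinct. For realisation I would take, for $(v^k,p)$ with $p\in(0,1)$, the sequence $v(m)=1^mv^k$ with $n(m)$ chosen so that $n(m)/(n(m)+\#v(m))\to p$, and obtain the two boundary measures by letting $n(m)$ be negligible, resp. dominant, against $\#v(m)$. For distinctness: when $p\in(0,1)$ one has $\mu_{v^k,p}(\varepsilon,l)=p^l$, which recovers $p$; the $p=0$ and $p=1$ measures have $\mu(\varepsilon,1)\in\{0,1\}$ and are told apart by whether $\mu(\varepsilon,l)=1$ for all $l$; and for fixed $p\in(0,1)$ the normalised tail $v^k$, which does not begin with $1$, is recovered from $\mu$ as the rank-maximal word $w$ with $\mu(w,|w|)>0$ that does not begin with $1$, since the set of all $w$ with $\mu(w,|w|)>0$ is exactly $\{w\le v^k\}\cup\{1^Wv^k:W\ge1\}$; hence distinct normalised tails yield distinct measures.

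The genuinely substantive step, and the one I expect to cost the most effort, is the identification of the stabilised tail $v^f$ — equivalently, showing that the limit is insensitive to which normal-form subsequence was extracted, so that the finitely many ``blowing-up'' runs $1^{\beta_i(m)}$ (the leading run together with every block above index $f$) are harmless and only the eventually-stabilised $2$-pattern below index $f$ survives in the limit. This is precisely where the two Corollaries to the $Q_{w,v}$-theorem do the real work; everything else — the compactness reduction, realisation, and distinctness — is bookkeeping with $v_w$, $w_v$, $\#(\cdot)$, $d(\cdot)$, $|\cdot|$ layered on the already established identities, with only a little extra care needed at the degenerate boundaries $p=0$, $p=1$ and at $f=0$ so that no measure is listed twice.
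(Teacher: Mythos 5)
Your proposal is correct and follows essentially the same route as the paper: the theorem is presented there as a summary whose proof is exactly the chain you describe (subsequence extraction to normal form with $\frac{n(m)}{n(m)+\#v(m)}\to p$, Theorem~\ref{t1} for $\sum c_i<\infty$, Lemma~\ref{lem1} plus the $Q$-corollaries identifying the limit with $Q_{w,v^f}(p)/Q_{\varepsilon,v^f}(p)$ for $p\in(0,1)$, and the boundary cases $p\in\{0,1\}$). Your realisation and distinctness arguments are more explicit than the paper's, which simply asserts the measures are obviously distinct.
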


\newpage

\section{Благодарности}
Работа выполнена в Санкт-Петербургском международном математическом институте имени Леонарда Эйлера при финансовой поддержке Министерства науки и высшего
образования Российской Федерации (соглашение № 075-15-2022-287 от 06.04.2022).

\newpage


\addcontentsline{toc}{section}{Список литературы}

\end{document}